\newtheorem{thm}{Theorem}
\newtheorem{lem}{Lemma}
\theoremstyle{definition}
\theoremstyle{remark}
\newtheorem{rem}{Remark}
\newcommand{\dint}{\displaystyle\int}
\newcommand{\R}{\mathbb{R}}
\numberwithin{equation}{section} \numberwithin{lem}{section}
\numberwithin{thm}{section} \numberwithin{prop}{section}
\numberwithin{cor}{section} \numberwithin{rem}{section}
\title[Global existence under sharp condition]{Supercritical degenerate parabolic-parabolic Keller-Segel system --
existence criterion given by the best constant in Sobolev's inequality}
\author{Jinhuan Wang$^{\,1}$, Yue Li$^{\,1}$ and Li Chen$^{\,2}$}
\thanks{The work of J. Wang is partially supported by Program for Liaoning Excellent Talents in University (Grant No. LJQ2015041) and Key Project of Education Department of Liaoning Province (Grant No. LZD201701).}
\thanks{Corresponding author: Li Chen. She is partially supported by the DAAD project ``DAAD-PPP VR China" (Project-ID: 57215936).}
\begin{document}
\maketitle
\begin{center}
{\footnotesize
1-School of Mathematics, Liaoning University, Shenyang, 110036, P. R. China \\
 email: wjh800415@163.com\\
 \smallskip
2-Lehrstuhl f\"ur Mathematik IV, Universit\"at Mannheim, 68131, Mannheim.
email: chen@math.uni-mannheim.de
}
\end{center}
\maketitle
\date{}
\begin{abstract}
This article presents a relationship between the sharp constant of the Sobolev inequality and the initial criterion to the global existence of degenerate parabolic-parabolic Keller-Segel system with the diffusion exponent $\frac{2n}{2+n}<m<2-\frac{2}{n}$.
The global weak solution obtained in this article does not need any smallness assumption on the initial data. Furthermore, a uniform in time $L^{\infty}$ estimate of the weak solutions is obtained via the Moser iteration, where the constant in $L^p$ estimate for the gradient of the chemical concentration has been exactly formulated in order to complete the iteration process.
\end{abstract}

{\small {\bf Keywords:} degenerate Keller-Segel system, supercritical exponent, $L^\infty$ estimate, free energy.}

\newtheorem{theorem}{Theorem}[section]
\newtheorem{definition}{Definition}[section]
\newtheorem{lemma}{Lemma}[section]
\newtheorem{proposition}{Proposition}[section]
\newtheorem{corollary}{Corollary}[section]
\newtheorem{remark}{Remark}[section]
\renewcommand{\theequation}{\thesection.\arabic{equation}}
\catcode`@=11 \@addtoreset{equation}{section} \catcode`@=12


\section{Introduction}
In this paper, we study the following degenerate parabolic-parabolic Keller-Segel equations with diffusion exponent $\frac{2n}{2+n}<m<2-\frac{2}{n}$ in dimension $n\geq 3$,
\begin{eqnarray}
&&\rho_t=\Delta \rho^m -{\rm div}(\rho\nabla c),\quad x\in \mathbb R^n ,t\geq 0,\label{equ1}\\
&&c_t=\Delta c- c+\rho, \quad x\in \mathbb R^n ,t\geq 0,\label{equ2}\\
&&\rho(x,0)=\rho_{0}(x), ~~c(x,0)=c_{0}(x),\quad x\in \mathbb R^n,\label{initialdata}
\end{eqnarray}
where $\rho(x,t)$ and $c(x,t) $ represent respectively the density of bacteria and the chemical substance concentration. One important feature of this system is the conservation of mass
\begin{eqnarray}\label{masscons}
\dint_{\R^n}\rho(x,t)dx=\dint_{\R^n}\rho(x,0)dx=:M_0.
\end{eqnarray}

The system (\ref{equ1})-(\ref{equ2}) can be  recast into  the following gradient flow structures
\begin{eqnarray}\label{gradientflow}
\rho_{t}=\nabla\cdot\left(\rho \nabla \frac{\delta \mathcal{F}(\rho,c)}{\delta \rho}\right),\quad c_{t}=-\frac{\delta \mathcal{F}(\rho,c)}{\delta c},
\end{eqnarray}
where
\begin{eqnarray*}
\frac{\delta \mathcal{F}(\rho,c)}{\delta \rho}=\frac{m}{m-1}\rho^{m-1}-c\quad \mbox{ and }\quad \frac{\delta \mathcal{F}(\rho,c)}{\delta c}=-\Delta c+c-\rho
\end{eqnarray*}
are respectively the first order variational of the following energy functional with respect to functions $\rho$ and $c$,
\begin{eqnarray}\label{entropy}
\mathcal{F}(\rho,c)= \int_{\R^n} \big(\frac{1}{m-1}\rho^m -\rho c +\frac{1}{2}|\nabla c|^2+\frac{1}{2}c^2\big)dx.
\end{eqnarray}
From (\ref{gradientflow}), we easily know that the following energy-dissipation relation holds
\begin{eqnarray*}\label{entropyineq}
\frac{d}{dt}\mathcal{F}(\rho(\cdot,t),c(\cdot,t)) +\int_{\mathbb{R}^n} \Big(\rho \big|\nabla(\frac{m}{m-1}\rho^{m-1}-c)\big|^2+|c_t|^2\Big) dx= 0.
\end{eqnarray*}

The system (\ref{equ1})-(\ref{initialdata}) with different $m$ has been widely studied since the last decade. Sugiyama et al. in \cite{S06,SK06} and Ishida et al. in \cite{IY12} proved that if $m>2-\frac{2}{n}$, the solution exists globally for any initial data; if $1<m\leq 2-\frac{2}{n}$, for the parabolic-elliptic case both global existence and blow-up can happen for specifically selected initial data \cite{SK06}; for the  parabolic-parabolic case global existence was proved for small initial data \cite{IY12}. The diffusion exponent $m^*:=2-\frac{2}{n}$ comes from the scaling invariance of the total mass, which is sometimes called Fujita type exponent.

 Since 2009, the sharp initial criterion of degenerate parabolic-elliptic Keller-Segel equations has attracted more and more attentions. Blanchet et al. in \cite{BCL09} studied the degenerate case with diffusion exponent $m=m^*$, where a critical mass was given. Later on, Chen et al. \cite{CLW1} found another critical diffusion exponent $m_c:=\frac{2n}{n+2}$, which comes from the conformal invariance of the free energy. In \cite{CLW1} it was shown that the $L^{m_c}$ norm of positive stationary solutions can be viewed as the criterion for the global existence and blow-up of solutions. In the case $m_c < m < m^*$ (see \cite{CW}),
there is a constant $s^*$ depending only on the initial mass and the best constant of the Hardy-Littlewood-Sobolev inequality to classify the initial data in order to achieve global existence or blow-up. More precisely, for $\rho_0\in L^1(\mathbb{R}^n)\cap L^\infty(\mathbb{R}^n)$
there is a {\em unique} global weak solution if $\|\rho_0\|_{L^{m_c}(\R^n)}$ is less than $s^*$,
and blow-up occurs in finite time if $\|\rho_0\|_{L^{m_c}(\R^n)}$ is larger than $s^*$ (see \cite{CW,LW}).

However, the best criterion to the parabolic-parabolic Keller-Segel system has not been well developed comparing to its parabolic-elliptic version, due to the fact that the symmetric structure in the aggregation is missing.
In the paper \cite{B13}, Blanchet et al. showed the global existence for the case $m=m^*$ when the chemotactic sensitivity is smaller than a threshold, which is exactly the same as that in the parabolic-elliptic case.
One of the purposes of this paper is to derive the condition $\|\rho_0\|_{L^{m_c}(\R^n)}<s^*$ for global existence when $m_c<m<m^*$. Here $s^*$ is also exactly the same as that of the parabolic-elliptic case in \cite{CW}.

The Sobolev inequality with the best constant (for example in \cite{LiebAnalysis})
\begin{eqnarray}\label{Sobolev}
S_n \|u\|_{L^{\frac{2n}{n-2}}(\R^n)}^2\leq \|\nabla u\|_{L^2(\R^n)}^2, \quad  S_n=\dfrac{n(n-2)}{4}2^{\frac{2}{n}}\pi^{1+\frac{1}{n}}\Gamma\Big(\frac{n+1}{2}\Big)^{-\frac{2}{n}}
\end{eqnarray}
plays an important role in the decomposition of the free energy (\ref{entropy}). More precisely, the free energy has the following lower bound for $m>\frac{2n}{n+2}$, i.e.
\begin{align*}
F(\rho)= &\dint_{\R^n} \Big(\frac{1}{m-1}\rho^m -\rho c +\dfrac{1}{2}|\nabla c|^2+ |c|^2\Big)dx\nonumber\\
\nonumber \geq & \dfrac{1}{m-1}\|\rho\|_{L^m(\R^n)}^m -\dfrac{1}{2S_n} \|\rho\|_{L^{\frac{2n}{n+2}}(\R^n)}^2 -\dfrac{S_n}{2}\|c\|_{L^{\frac{2n}{n-2}}(\R^n)}^2+ \frac{1}{2}\|\nabla c\|_{L^2(\R^n)}^2\\
\geq& \Big(\dfrac{1}{m-1} M_0^{\frac{2n-m(n+2)}{n-2}}-\dfrac{1}{2S_n} \|\rho\|_{L^{\frac{2n}{n+2}}(\R^n)}^{2-\frac{2n(m-1)}{n-2}}\Big) \|\rho\|_{L^{\frac{2n}{n+2}}(\R^n)}^{\frac{2n(m-1)}{n-2}} .
\end{align*}
Then after appropriate assumption on initial free energy, the estimate on $\|\rho\|_{L^{\frac{2n}{n+2}}(\R^n)}$ can be obtained. It is the milestone in getting global existence of weak solutions. Actually, there is a big class of evolutionary equations that have close connections with corresponding Sobolev inequalities\cite{LW1,W}. We refer to \cite{DELL} given by Dolbeault et al. for recent developments.

The main result of this paper is given by the following theorem
\begin{thm}\label{thm}
Assume that the initial density $\rho_0\in L^1_+(\mathbb R^{n})\cap L^{m} (\mathbb R^{n})$ and ${\mathcal F}(\rho_0)<{\mathcal F}^*$,  $\|\rho_0\|_{L^{\frac{2n}{n+2}} (\mathbb R^{n})}< (s^*)^{\frac{n-2}{2n(m-1)}}$ and $\nabla c_0\in L^{\infty}(\R^n)$, then (\ref{equ1})-(\ref{initialdata}) has a global weak solution, i.e. for all $T>0$ and some $1<r,s\leq 2$, there is a function $\rho(x,t)$ with
\begin{align}
&\rho\in L^\infty(0,+\infty; L^1_+(\mathbb R^{n})\cap L^{m}(\mathbb R^n)),\label{r1}\\
& \nabla \rho \in L^2(0,T;L^r(\mathbb R^n)), ~~\partial_t\rho\in L^2(0,T;(W_{\rm loc}^{1, s'}(\mathbb{R}^n))'),\quad \frac{1}{s'}+\frac{1}{s}=1,\label{r2}
\end{align}
such that it satisfies (\ref{equ1})-(\ref{initialdata}) in the sense of distribution.
Here ${\mathcal F}^*$ and $s^*$ are universal constants given by
\begin{align*}
{\mathcal F}^*=&\frac{2n-2-mn}{(m-1)(n-2)}M_0^{\frac{2n-mn-2n}{2n-2-mn}}\Big(\frac{2nS_n}{n-2}\Big)^{\frac{nm-n}{2n-2-mn}}>0,\\ s^*=&\left(\frac{2nS_n}{n-2} M_0^{\frac{2n-m(n+2)}{n-2}}\right)^{\frac{n(m-1)}{2n-2-mn}}>0,
\end{align*}
where $M_0$ is the initial mass $\|\rho_0\|_{L^1(\mathbb{R}^n)}$ defined in (\ref{masscons}) and $S_n$ is the best constant of the Sobolev inequality, see (\ref{Sobolev}).
\end{thm}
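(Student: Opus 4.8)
The plan is to obtain the weak solution as a limit of smooth solutions of a non-degenerate regularization, designed so that the gradient-flow/free-energy structure (\ref{gradientflow})--(\ref{entropy}) is preserved up to controllable errors. Concretely, I would replace the degenerate diffusion $\Delta\rho^{m}$ by $\Delta\big((\rho+\varepsilon)^{m}-\varepsilon^{m}\big)$, adding if necessary a vanishing linear viscosity $\delta\Delta\rho$ to reach classical smoothness, and mollify/truncate the data to smooth, strictly positive $\rho_{0}^{\varepsilon}$ and smooth $c_{0}^{\varepsilon}$ converging to $\rho_{0},c_{0}$ in the natural norms, with $\mathcal F(\rho_{0}^{\varepsilon},c_{0}^{\varepsilon})\to\mathcal F(\rho_{0})$ and $\|\rho_{0}^{\varepsilon}\|_{L^{m_{c}}}\to\|\rho_{0}\|_{L^{m_{c}}}$, where $m_{c}=\tfrac{2n}{n+2}$. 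Classical parabolic theory then provides, for each $\varepsilon>0$, a global smooth solution $(\rho_{\varepsilon},c_{\varepsilon})$ together with the corresponding ($\varepsilon$-perturbed) energy--dissipation identity.

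The core of the argument is a family of $\varepsilon$-uniform a priori bounds. Mass conservation gives $\|\rho_{\varepsilon}(t)\|_{L^{1}}=M_{0}$. Inserting the Sobolev lower bound for the free energy --- the chain of inequalities displayed before Theorem~\ref{thm}, valid for $m>m_{c}$, which uses Young's inequality, the sharp Sobolev inequality (\ref{Sobolev}) and the $L^{1}$--$L^{m}$ interpolation of $\|\rho\|_{L^{m_{c}}}$ --- into the energy inequality $\mathcal F(\rho_{\varepsilon}(t),c_{\varepsilon}(t))\le\mathcal F(\rho_{0}^{\varepsilon},c_{0}^{\varepsilon})$ yields, for all $t\ge0$,
\[
g\big(\|\rho_{\varepsilon}(t)\|_{L^{m_{c}}}\big)\ \le\ \mathcal F(\rho_{\varepsilon}(t),c_{\varepsilon}(t))\ \le\ \mathcal F(\rho_{0}^{\varepsilon},c_{0}^{\varepsilon}),\qquad g(s)=\tfrac{1}{m-1}M_{0}^{\alpha}s^{\gamma}-\tfrac{1}{2S_{n}}s^{2},
\]
with $\alpha=\tfrac{2n-m(n+2)}{n-2}$ and $\gamma=\tfrac{2n(m-1)}{n-2}\in(0,2)$; here $\gamma<2$ is precisely the supercriticality $m<m^{*}$. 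Since $g$ vanishes at $s=0$, increases to a unique positive maximum, and then decreases to $-\infty$, its maximum value equals $\mathcal F^{*}$ and is attained at $s=(s^{*})^{(n-2)/(2n(m-1))}=(s^{*})^{1/\gamma}$ --- this is exactly how the stated formulas for $\mathcal F^{*}$ and $s^{*}$ arise. As $t\mapsto\|\rho_{\varepsilon}(t)\|_{L^{m_{c}}}$ is continuous, starts (for $\varepsilon$ small) strictly below $(s^{*})^{1/\gamma}$ with $\mathcal F(\rho_{0}^{\varepsilon},c_{0}^{\varepsilon})<\mathcal F^{*}$, and cannot reach the maximizer of $g$ without forcing $\mathcal F>\mathcal F^{*}$, a continuation argument keeps $\|\rho_{\varepsilon}(t)\|_{L^{m_{c}}}<(s^{*})^{1/\gamma}$ for all $t>0$, uniformly in $\varepsilon$. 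Consequently the leading term $\tfrac{1}{m-1}\|\rho_{\varepsilon}(t)\|_{L^{m}}^{m}$ dominates the lower bound, giving uniform-in-time bounds on $\|\rho_{\varepsilon}(t)\|_{L^{m}}$, $\|\nabla c_{\varepsilon}(t)\|_{L^{2}}$, $\|c_{\varepsilon}(t)\|_{L^{2}}$, $\|c_{\varepsilon}(t)\|_{L^{2n/(n-2)}}$, and --- from the dissipation --- $\int_{0}^{T}\!\!\int\rho_{\varepsilon}\big|\nabla(\tfrac{m}{m-1}\rho_{\varepsilon}^{m-1}-c_{\varepsilon})\big|^{2}\,dx\,dt\le C$ and $\int_{0}^{T}\|\partial_{t}c_{\varepsilon}\|_{L^{2}}^{2}\,dt\le C$. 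Expanding the square and absorbing the cross term by Young's inequality against the $\nabla c_{\varepsilon}$-bounds gives $\rho_{\varepsilon}|\nabla\rho_{\varepsilon}^{m-1}|^{2}\in L^{1}_{t,x}$, i.e.\ $\nabla\rho_{\varepsilon}^{m-1/2}\in L^{2}(0,T;L^{2}(\R^{n}))$; interpolating this (and, when $m\ge\tfrac32$, a further energy estimate obtained by testing (\ref{equ1}) with a small power of $\rho_{\varepsilon}$, which produces $\nabla\rho_{\varepsilon}^{(m+k-1)/2}\in L^{2}_{t,x}$) with the uniform $L^{\infty}(0,T;L^{m})$ bound yields $\nabla\rho_{\varepsilon}\in L^{2}(0,T;L^{r}(\R^{n}))$ for a suitable $r\in(1,2]$; testing (\ref{equ1}) against $W^{1,s'}_{\mathrm{loc}}$-functions then gives $\partial_{t}\rho_{\varepsilon}\in L^{2}(0,T;(W^{1,s'}_{\mathrm{loc}}(\R^{n}))')$, $\tfrac1{s'}+\tfrac1s=1$, all uniformly in $\varepsilon$.

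With these bounds I would pass to the limit along a subsequence. The Aubin--Lions--Simon lemma applied to $\{\rho_{\varepsilon}\}$ --- bounded in $L^{2}(0,T;W^{1,r}_{\mathrm{loc}})$ with $\{\partial_{t}\rho_{\varepsilon}\}$ bounded in $L^{2}(0,T;(W^{1,s'}_{\mathrm{loc}})')$ --- gives strong convergence $\rho_{\varepsilon}\to\rho$ in $L^{2}(0,T;L^{p}_{\mathrm{loc}})$ and a.e., hence $\big((\rho_{\varepsilon}+\varepsilon)^{m}-\varepsilon^{m}\big)\to\rho^{m}$ in $L^{1}_{\mathrm{loc}}$; parabolic regularity for (\ref{equ2}) --- fed by the extra integrability of $\rho_{\varepsilon}$ obtained from interpolating the $L^{\infty}_{t}L^{m}_{x}$ bound with $\nabla\rho_{\varepsilon}^{m-1/2}\in L^{2}_{t,x}$, together with the hypothesis $\nabla c_{0}\in L^{\infty}(\R^{n})$ --- upgrades $c_{\varepsilon}\to c$ strongly enough that $\rho_{\varepsilon}\nabla c_{\varepsilon}\to\rho\nabla c$ in $\mathcal D'$. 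Passing to the limit in the weak formulation of the regularized system then produces a distributional solution of (\ref{equ1})--(\ref{initialdata}), and weak lower semicontinuity transfers the uniform bounds to $\rho$, i.e.\ (\ref{r1})--(\ref{r2}).

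I expect the main obstacle to be twofold. First, designing the regularization so that it is genuinely non-degenerate (hence covered by classical existence and regularity theory) while its modified free energy still obeys the same lower bound up to an error vanishing with $\varepsilon$: only then does the continuation argument deliver an $\varepsilon$-uniform sub-threshold bound on $\|\rho_{\varepsilon}\|_{L^{m_{c}}}$, and it is exactly here that the sharpness of $S_{n}$ in (\ref{Sobolev}) is used, since it fixes the maximizer of $g$ --- hence the thresholds $\mathcal F^{*}$ and $s^{*}$ --- at their stated values. Second, converting the single dissipation integral into a reflexive-space bound $\nabla\rho_{\varepsilon}\in L^{2}(0,T;L^{r})$ with $r>1$ throughout the whole supercritical range $m_{c}<m<m^{*}$ and every $n\ge3$: the naive identity $\nabla\rho=(m-\tfrac12)^{-1}\rho^{3/2-m}\nabla\rho^{m-1/2}$ involves the power $3/2-m$, whose sign flips across $m=\tfrac32$, so for $m\ge\tfrac32$ one cannot use it directly and must instead combine a power-testing energy estimate --- which itself needs the $L^{p}$-estimate for $\nabla c_{\varepsilon}$ with its explicit constant, as announced in the abstract --- with the $L^{\infty}_{t}L^{m}_{x}$ bound, tuned so that $r$ and the conjugate exponent $s$ both stay in $(1,2]$.
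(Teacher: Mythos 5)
Your proposal is correct and follows essentially the same route as the paper: a non-degenerate regularization preserving the gradient-flow structure, the sharp-Sobolev decomposition of the free energy yielding the concave profile whose maximum defines $\mathcal F^*$ and $s^*$, a continuity argument keeping $\|\rho_\varepsilon(t)\|_{L^{2n/(n+2)}}$ below threshold for all time, and then uniform $L^p$, gradient and time-derivative estimates followed by Aubin--Lions compactness. The only notable differences in execution are that the paper also mollifies the coupling terms (using $c_\varepsilon\ast J_\varepsilon$ and $\rho_\varepsilon\ast J_\varepsilon$) so that the regularized system satisfies an exact energy--dissipation identity, and it derives the gradient bounds from $L^p$ energy estimates driven by the explicit semigroup bound on $\|\nabla c_\varepsilon\|_{L^\infty}$ rather than from the dissipation integral alone.
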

\begin{rem}
For parabolic-parabolic Keller-Segel system, the initial criterion $s^*$ is exactly the same as that of the parabolic-elliptic case \cite{CW}. In fact, Sobolev's inequality in parabolic-parabolic system plays the same role as that of the critical Hardy-Littlewood-Sobolev (H.-L.-S.) inequality in the parabolic-elliptic case. In order to compare, we cite H.-L.-S. inequality from  \cite{LiebAnalysis}.
For $\rho\in L^{\frac{2n}{n+2}}({\mathbb R^n})$, it holds that
\begin{eqnarray*}
\dint\dint_{\mathbb
R^n\times \mathbb
R^n}\dfrac{\rho(x)\rho(y)}{|x-y|^{n-2}} dx dy \leq C(n)
\|\rho\|_{L^{\frac{2n}{n+2}}(\R^n)}^2,
\end{eqnarray*}
where the best constant
\begin{eqnarray*}
C(n)=\pi^{(n-2)/2}\dfrac{1}{\Gamma(n/2+1)}\left\{\dfrac{\Gamma(n/2)}{\Gamma(n)}\right\}^{-2/n}.
\end{eqnarray*}
This has been used to derive the initial criterion in \cite{CW}
$$
s^*=\Big(\dfrac{2n^2\alpha(n)M_0^{\frac{2n-m(n+2)}{n-2}}}{C(n)}\Big)^{\frac{n(m-1)}{2n-2-mn}}.
$$
\end{rem}

\vskip 5mm
 Furthermore, we get the following uniform in time $L^{\infty}$-bound of weak solutions.
\begin{thm}\label{unithm}
Assume that the initial density $\rho_0\in L^1_+(\mathbb R^{n})\cap L^{\infty} (\mathbb R^{n})$ and $\nabla c_0\in L^{\infty}(\mathbb{R}^n)$. Let $(\rho,c)$ be a weak solution to the system (\ref{equ1})-(\ref{initialdata}). Then there is a constant $C$ only depends on $M_0,\|\rho_0\|_{L^{\infty}(\mathbb{R}^n)}$ and $\|\nabla c_0\|_{L^{\infty}(\mathbb{R}^n)}$ such that
\begin{eqnarray}\label{1.7}
\|\rho\|_{L^\infty(0,\infty; L^{\infty}(\mathbb{R}^{n}))}\leq C.
\end{eqnarray}
\end{thm}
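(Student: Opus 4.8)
The plan is to prove (\ref{1.7}) as an $L^{\infty}$ a priori estimate of Moser--Alikakos type: derive the bound for the smooth, non-degenerate approximations used to construct the weak solution and pass to the limit. The one genuinely new ingredient, stressed in the abstract, is a bound on $\nabla c$ with an \emph{explicit} constant. \emph{Step 1 (the $L^p$ identity).} For $p>1$, multiply (\ref{equ1}) by $p\rho^{p-1}$, integrate over $\R^n$, and use $\rho^{p-1}\nabla\rho=\tfrac{2}{m+p-1}\rho^{\frac{p-m+1}{2}}\nabla\rho^{\frac{m+p-1}{2}}$ to obtain
\[
\frac{d}{dt}\|\rho(t)\|_{L^p}^p+\frac{4mp(p-1)}{(m+p-1)^2}\big\|\nabla\rho^{\frac{m+p-1}{2}}\big\|_{L^2}^2=\frac{2p(p-1)}{m+p-1}\int_{\R^n}\rho^{\frac{p-m+1}{2}}\,\nabla\rho^{\frac{m+p-1}{2}}\cdot\nabla c\,dx .
\]
H\"older (exponents $\tfrac{2q}{q-2},2,q$) and Young bound the right-hand side by $\varepsilon\|\nabla\rho^{\frac{m+p-1}{2}}\|_{L^2}^2+C(p,\varepsilon)\|\nabla c\|_{L^q}^2\|\rho\|_{L^{\sigma}}^{p-m+1}$ with $\sigma=\tfrac{(p-m+1)q}{q-2}$ (and $\sigma=p-m+1$ when $q=\infty$), the first term being absorbed into the dissipation.

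\emph{Step 2 (quantitative control of $\nabla c$).} From the Duhamel formula $c(t)=e^{t(\Delta-1)}c_0+\int_0^t e^{(t-s)(\Delta-1)}\rho(s)\,ds$, the smoothing bound $\|\nabla e^{\tau(\Delta-1)}f\|_{L^q}\le C\,\tau^{-\frac12-\frac n2(\frac1p-\frac1q)}e^{-\tau}\|f\|_{L^p}$ (valid for $\tfrac1p-\tfrac1q<\tfrac1n$), and $\|\nabla e^{\tau(\Delta-1)}c_0\|_{L^\infty}\le e^{-\tau}\|\nabla c_0\|_{L^\infty}$, the exponential weight $e^{-\tau}$ makes the time integral converge \emph{uniformly in $t$}, giving
\[
\|\nabla c(t)\|_{L^q}\le C_0\|\nabla c_0\|_{L^\infty}+C(n)\,\Gamma\!\Big(\tfrac12-\tfrac n2\big(\tfrac1p-\tfrac1q\big)\Big)\sup_{0\le s\le t}\|\rho(s)\|_{L^p}.
\]
This is the ``$L^p$ estimate for the gradient of the chemical concentration'' whose constant has to be written explicitly; its $\Gamma$-dependence on $(p,q)$ is exactly what must be tracked as $p,q\to\infty$. (The $\nabla c_0$ term, known only in $L^\infty$, is reinserted by interpolation with an $L^{q_0}$ bound from the previous step, or directly at the last stage where only $q=\infty$ is used.) Note that the hypothesis $m>\tfrac{2n}{2+n}$ is precisely what makes $\tfrac{nm}{n-m}>2$, so that applied with $p=m$ this estimate already yields $\nabla c\in L^\infty_t L^q$ for some $q\ge2$ and the drift term of Step 1 becomes controllable from the starting regularity.

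\emph{Step 3 (from $L^m$ to every finite $L^p$; then $\nabla c\in L^\infty$).} Starting from the available uniform bound $\rho\in L^\infty(0,\infty;L^1\cap L^m)$ together with mass conservation, insert Step 2 into Step 1 and estimate $\|\rho\|_{L^\sigma}^{p-m+1}$ by Gagliardo--Nirenberg applied to $v:=\rho^{\frac{m+p-1}{2}}$, interpolating between $L^{2n/(n-2)}$ (controlled by $\|\nabla v\|_{L^2}$) and the currently known $L^{p_0}$ bound of $\rho$; since $m>1$, the resulting power of $\|\nabla v\|_{L^2}$ is $<2$, so the nonlinear term is absorbed \emph{with no smallness assumption}. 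A second Gagliardo--Nirenberg estimate (anchored on $\|\rho\|_{L^1}=M_0$, admissible in the same range of $m$) gives $\|\rho\|_{L^p}^p\le\varepsilon\|\nabla v\|_{L^2}^2+C(M_0,p)$, so that one is left with $\tfrac{d}{dt}\|\rho\|_{L^p}^p+\delta_p\|\rho\|_{L^p}^p\le C_p$, and Gr\"onwall yields $\sup_{t\ge0}\|\rho(t)\|_{L^p}\le C(M_0,\|\rho_0\|_{L^p},\|\nabla c_0\|_{L^\infty})$. Each such step lets Step 2 raise the admissible exponent $q$ for $\nabla c$, hence the admissible $p$ for $\rho$; finitely many iterations reach some $p>n$, and Step 2 with that $p$ and $q=\infty$ then gives $\nabla c\in L^\infty((0,\infty)\times\R^n)$ with a norm $N$ controlled by $M_0,\|\rho_0\|_{L^\infty},\|\nabla c_0\|_{L^\infty}$.

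\emph{Step 4 (Moser iteration, and the main obstacle).} With $\|\nabla c\|_{L^\infty((0,\infty)\times\R^n)}\le N$ now fixed, run Step 1 with $q=\infty$ along a sequence $p_k\to\infty$; the same Gagliardo--Nirenberg absorption gives, for each $k$,
\[
\frac{d}{dt}\|\rho(t)\|_{L^{p_{k+1}}}^{p_{k+1}}+\delta_k\|\rho(t)\|_{L^{p_{k+1}}}^{p_{k+1}}\le A_k\Big(\sup_{t\ge0}\|\rho(t)\|_{L^{p_k}}\Big)^{\gamma_k},
\]
with $A_k,\delta_k,\gamma_k$ explicit in $p_k,n,m,M_0,N$. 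Integrating in time and writing $Q_k:=\max(1,\sup_{t\ge0}\|\rho(t)\|_{L^{p_k}})$ produces an Alikakos-type recursion $Q_{k+1}\le(Cb^{\,k})^{1/p_{k+1}}Q_k^{\gamma_k/p_{k+1}}$. The crux --- and the main obstacle --- is the bookkeeping of these constants: the diffusion coefficient $\tfrac{4mp(p-1)}{(m+p-1)^2}$ degenerates like $p^{-1}$, so $\delta_k$ shrinks while $A_k$ grows and inherits the $\Gamma$-factor from Step 2, and one must verify that $A_k$ grows more slowly than $p_{k+1}$ and that $\gamma_k/p_{k+1}$ stays below a fixed constant, so that $\lim_k Q_k<\infty$ uniformly in $t$; keeping track of all Gagliardo--Nirenberg exponents along the way is what confines the argument to the range $\tfrac{2n}{2+n}<m<2-\tfrac2n$. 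Once this is established, $\|\rho\|_{L^\infty(0,\infty;L^\infty(\R^n))}=\sup_{t}\lim_k\|\rho(t)\|_{L^{p_k}}\le C(M_0,\|\rho_0\|_{L^\infty},\|\nabla c_0\|_{L^\infty})$, which is (\ref{1.7}).
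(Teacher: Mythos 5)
Your proposal follows essentially the same route as the paper: the same $L^{p}$ differential inequality for $\rho$, the same Duhamel/heat-kernel bound for $\nabla c$ with the explicit $\Gamma$-constant, a bootstrap from $L^m$ to all finite $L^p$, and an Alikakos--Moser recursion in which one checks that the prefactors grow only polynomially in $p_k$ and the exponents stay at most $2$. The only organizational difference is that you fix $\|\nabla c\|_{L^{\infty}((0,\infty)\times\R^n)}\le N$ once and for all after reaching some $p>n$, whereas the paper keeps $\|\nabla c\|_{L^{\infty}(\R^n)}$ coupled to $\|\rho\|_{L^{\infty}(0,\infty;L^{p_{k-1}}(\R^n))}$ inside each step of the iteration (the $\eta_3$ term in Lemma \ref{lem2}); both work, and your decoupling slightly streamlines the bookkeeping.
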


Next we briefly explain the main idea and the key points in getting $L^{\infty}$ estimate by using Moser iteration. The iteration is based on the following $L^p$ estimate
\begin{align}
\frac{d}{dt}\dint_{\mathbb{R}^n} \rho^{p} \,dx
&=-mp(p-1)\dint_{\mathbb{R}^n}\rho^{m+p-3}|\nabla\rho|^2 \,dx +p(p-1)\dint_{\mathbb{R}^n}\rho^{p-1}\nabla c\cdot\nabla\rho\,dx\label{na}\\
\mbox{or }\quad&=-mp(p-1)\dint_{\mathbb{R}^n}\rho^{m+p-3}|\nabla\rho|^2 \,dx -(p-1)\dint_{\mathbb{R}^n}\rho^{p}\Delta c\,dx.\label{de}
\end{align}
 From (\ref{na}) and (\ref{de}), we know that it is necessary to estimate $\Delta c$ or $\nabla c$ in terms of $\rho$. By a special case of famous maximal $L^p$-regularity theorem,
which was proved by Matthias and Jan in \cite{MJ}, the estimate of $\Delta c$ is given by
\begin{eqnarray} \label{delta}
\|\Delta c\|_{L^p(0, T;L^p(\mathbb{R}^n))}\leq C_p(\|\rho\|_{L^p(0, T;L^p(\mathbb{R}^n))}+C),
\end{eqnarray}
where $C_p$ is a constant dependent of $p$ and the constant $C$ depends on initial data $c_0$. However, the exact formulation of $C_p$ was not written down in \cite{MJ}. A direct application of (\ref{delta}) without knowing $C_p$ explicitly will bring troubles in the iteration for $p\gg 1$.
Instead, we give the estimate for $\|\nabla c\|_{L^{\infty}(\mathbb{R}^n)}$ with an explicit constant by using semigroup representation of (\ref{equ2}), the properties of the heat kernel and Young's inequality for the convolution. Although the following discussion is standard, we have not found an appropriate reference to cite. Therefore, we do it by hand.

The mild solution to the Cauchy problem
\begin{eqnarray*}
&&\partial_{t}c=\Delta c-c+f,\quad x\in \mathbb R^n ,t\geq 0,\label{cregequ2}\\
&&c(x,0)=c_{0}(x), \quad x\in \mathbb R^n\label{creginitial}
\end{eqnarray*}
is given by
$$
c(x,t)=e^{-t}e^{t\Delta}c_0+\int_0^t e^{s-t}e^{(t-s)\Delta}f(x,s)\,ds,
$$
where the heat semigroup operator
$e^{t\Delta}$ is defined by
$$
(e^{t\Delta}h)(x, t) := G(x, t) \ast h(x, t),\quad G(x, t) = \frac{1}{(4\pi t)^{\frac{n}{2}}}e^{-\frac{|x|^2}{4t}}.
$$
From the classical semigroup theory, we know that if $c_0 \in L^q(\mathbb{R}^n)$ and $f\in L^1(0, T; L^q(\mathbb{R}^n))$, where $T > 0$, $1\leq q \leq \infty$, then
 $c(x, t) \in C([0, T]; L^q(\mathbb{R}^n))$.

It follows immediately from the Young inequality for the convolution \cite[pp. 99]{LiebAnalysis} that for any $1\leq q\leq p\leq +\infty$ and all $t>0$, $f\in L^{q}(\mathbb{R}^n)$, it holds that
\begin{eqnarray}
&&\|e^{t\Delta} f(\cdot,t)\|_{L^p(\R^n)}\leq A_{p,q,n}t^{-\frac{n}{2}(\frac{1}{q}-\frac{1}{p})}\|f(\cdot,t)\|_{L^q(\R^n)},\nonumber\\
&&\|\nabla e^{t\Delta} f(\cdot,t)\|_{L^p(\R^n)}\leq B_{p,q,n}t^{-\frac{1}{2}-\frac{n}{2}(\frac{1}{q}-\frac{1}{p})}\|f(\cdot,t)\|_{L^q(\R^n)}.\label{nabla heatk1}
\end{eqnarray}
Here
\begin{eqnarray*}
A_{p,q,n}=\frac{C_qC_r}{C_p}\frac{(2^{n-1}S_n\Gamma(\frac{n}{2}))^{\frac{1}{r}}}{(4\pi)^{\frac{n}{2}}r^\frac{n}{2r}},\quad
B_{p,q,n}=\frac{C_qC_r}{C_p}\frac{(2^{n-1}S_n
\Gamma(\frac{r}{2}+\frac{n}{2}))^{\frac{1}{r}}}{(4\pi)^{\frac{n}{2}}r^{\frac{1}{2}+\frac{n}{2r}}},\label{aandB}
\end{eqnarray*}
where $r$ satisfies $\frac{1}{r}=1+\frac{1}{p}-\frac{1}{q}$ and $C_q=q^{\frac{1}{q}-\frac{1}{2}}(q-1)^{\frac{1}{2}-\frac{1}{2q}}$ defined in \cite[pp. 98]{LiebAnalysis}.

Assuming $c_0\in W^{1,p}(\mathbb{R}^n)$ and $f\in L^{\infty}(0,\infty;L^q(\mathbb{R}^n))$, where $1 \leq q \leq p \leq \infty$, $\frac{1}{q}-\frac{1}{p}<\frac{1}{n}$. A direct application of the inequalities in (\ref{nabla heatk1}) and the Bochner Theorem in \cite[pp.650]{E}, we have for $t\in [0,\infty)$
\begin{align}
&\|c(\cdot,t)\|_{L^p(\R^n)}\leq \|c_0\|_{L^{p}(\R^n)}+A_{p,q,n}~\Gamma \left(1-\big(\frac{1}{q}-\frac{1}{p}\big)\frac{n}{2}\right)\|f\|_{L^{\infty}(0,\infty;~L^q(\mathbb{R}^n))},\label{cLp}\\
&\|\nabla c(\cdot,t)\|_{L^p(\R^n)}\leq \|\nabla c_0\|_{L^{p}(\R^n)}+B_{p,q,n} ~\Gamma \left(\frac{1}{2}-\big(\frac{1}{q}-\frac{1}{p}\big)\frac{n}{2}\right)\|f\|_{L^{\infty}(0,\infty;~L^q(\mathbb{R}^n))}.\label{nablacLp}
\end{align}
In particular, for $p=\infty$, $q>n$ we have
\begin{eqnarray}
\|\nabla c(\cdot,t)\|_{L^{\infty}(\R^n)}\leq \|\nabla c_0\|_{L^{\infty}(\R^n)}+B_{\infty,q,n} ~\Gamma \left(\frac{1}{2}-\frac{n}{2q}\right)\|f\|_{L^{\infty}(0,\infty;~L^q(\mathbb{R}^n))}.\label{nablacLinfty}
\end{eqnarray}
This key estimate with explicit constant $B_{\infty,q,n}$ is going to be used in (\ref{na}) in order to finish the iteration.
\vskip 5mm

This paper is arranged as follows.
The global existence of weak solutions for the system (\ref{equ1})-(\ref{initialdata}) is proved in Section 2.
Furthermore, a uniform in time $L^{\infty}$ bound of weak solutions is given in Section 3.

\section{Regularized problem and global existence of weak solutions}
In this section, for completeness, in order to prove the global existence , we start from the regularized problem of the system (\ref{equ1})-(\ref{initialdata}) and prove uniform estimates. The regularized problem is
\begin{eqnarray}
&\partial_{t}\rho_{\varepsilon}=\Delta(\rho_{\varepsilon}+\varepsilon)^{m}-{\rm div}((\rho_{\varepsilon}+\varepsilon)\nabla (c_{\varepsilon}\ast J_{\varepsilon})),& x\in \mathbb R^n ,t\geq 0,\label{regequ1}\\
&\partial_{t}c_{\varepsilon}=\Delta c_{\varepsilon}-c_{\varepsilon}+\rho_{\varepsilon}\ast J_{\varepsilon},& x\in \mathbb R^n ,t\geq 0,\label{regequ2}\\
&\rho(x,0)=\rho_{0,\varepsilon}(x), c(x,0)=c_{0,\varepsilon}(x), & x\in \mathbb R^n,\label{reginitial}
\end{eqnarray}
where $J_{\varepsilon}$ is a mollifier with radius $0<\varepsilon\ll 1$ satisfying  $\dint_{\R^n} J_{\varepsilon}dx=1$. And the initial data are also mollified, i.e. $\rho_{0,\varepsilon}=\rho_{0}\ast J_{\varepsilon}$ and $c_{0,\varepsilon}=c_{0}\ast J_{\varepsilon}$. Under the assumptions on $\rho_0,c_0$, it is obvious that
$\rho_{0,\varepsilon}(x)\geq0$, $c_{0,\varepsilon}(x)\geq0$ for any $x\in \mathbb R^n$,
$\rho_{0,\varepsilon}\in L^r(\mathbb{R}^n)$ for $r\geq 1$, $\nabla c_{0,\varepsilon}\in L^{\infty}(\R^n)$, and $\|\rho_{0,\varepsilon}\|_{L^1(\R^n)}=\|\rho_0\|_{L^1(\R^n)}=M_0$.

The classical parabolic theory implies that the above regularized problem (\ref{regequ1})-(\ref{reginitial}) has a global smooth non-negative solution $\rho_\varepsilon$ if the initial data is non-negative. Notice that for the solution of (\ref{regequ1})-(\ref{reginitial}), it holds that $\int_{\mathbb R^n}\rho_{\varepsilon}(x,t)\,dx=M_0$.

Throughout this section, we denote by $C(m,n,p)$ a constant only depends on $m,n,p$, $M_0$, $\|\rho_0\|_{L^{\infty}(\R^n)}$ and $\|\nabla c_0\|_{L^{\infty}(\R^n)}$, which may be different from line to line.

Now we focus on uniform estimates of solutions to (\ref{regequ1})-(\ref{reginitial}).
\subsection{Variational structure of the regularized problem and initial criterion} In this subsection we show the variational structure of the regularized problem and use it to deduce a uniform bound on the $L^{\frac{2n}{n+2}}(\mathbb R^n)$ norm of solutions to the regularized problem.

The regularized problem (\ref{regequ1})-(\ref{reginitial}) can be written as the following variational form
\begin{eqnarray}\label{reggradientflow}
\partial_{t}\rho_{\varepsilon}=\nabla\cdot\left((\rho_{\varepsilon}+\varepsilon) \nabla \frac{\delta \mathcal{F}_{\varepsilon}(\rho_{\varepsilon},c_{\varepsilon})}{\delta \rho_{\varepsilon}}\right),\quad \partial_{t} c_{\varepsilon}=-\frac{\delta \mathcal{F}_{\varepsilon}(\rho_{\varepsilon},c_{\varepsilon})}{\delta c_{\varepsilon}},
\end{eqnarray}
where
\begin{eqnarray*}
&&\frac{\delta \mathcal{F}_{\varepsilon}(\rho_{\varepsilon},c_{\varepsilon})}{\delta \rho_{\varepsilon}}=\frac{m}{m-1}(\rho_{\varepsilon}+\varepsilon)^{m-1}-c_{\varepsilon}\ast J_{\varepsilon},\\
&&\frac{\delta \mathcal{F}_{\varepsilon}(\rho_{\varepsilon},c_{\varepsilon})}{\delta c_{\varepsilon}}=-\Delta c_{\varepsilon}+c_{\varepsilon}-\rho_{\varepsilon}\ast J_{\varepsilon}
\end{eqnarray*}
 are respectively the first order variational of the following energy functional respect to functions $\rho_{\varepsilon}$ and $c_{\varepsilon}$,
\begin{align}\label{regentropy}
\mathcal F_{\varepsilon}(\rho_{\varepsilon}, c_{\varepsilon})=&\frac{1}{m-1}\dint_{\R^n}\big((\rho_{\varepsilon}+\varepsilon)^{m}-\varepsilon^{m})dx-\dint_{\R^n}\big(\rho_{\varepsilon}\ast J_{\varepsilon}\big) c_{\varepsilon}\,dx\nonumber\\
&+\frac{1}{2}\dint_{\R^n}|\nabla c_{\varepsilon}|^{2}dx+\frac{1}{2}\dint_{\R^n}c_{\varepsilon}^{2}\,dx.
\end{align}
From (\ref{reggradientflow}), we know that the following energy-dissipation relation holds
\begin{eqnarray*}
\dfrac{d}{dt}\mathcal F_{\varepsilon}(\rho_{\varepsilon},c_{\varepsilon}) +\dint_{\R^n} \left((\rho_{\varepsilon}+\varepsilon) \Big|\nabla\big(\dfrac{m}{m-1}(\rho_{\varepsilon}+\varepsilon)^{m-1}-c_{\varepsilon}\ast J_{\varepsilon}\big)\Big|^2+|\partial_{t}c_\varepsilon  |^2\right) dx= 0.
\end{eqnarray*}
The monotone decreasing property of the free energy $\mathcal F_{\varepsilon}(\rho_{\varepsilon}, c_{\varepsilon})$ follows immediately by the non-negativity of the entropy production.

Next, our purpose is to decompose the free energy defined in (\ref{regentropy}) into two parts by using the Sobolev inequality (\ref{Sobolev}) and the H\"older inequality.

As a preparation, we have
\begin{align*}
\dint_{\R^n}\big(\rho_{\varepsilon}\ast J_{\varepsilon}\big)  c_{\varepsilon} \,dx \leq& \|\rho_{\varepsilon}\ast J_{\varepsilon}\|_{L^{\frac{2n}{n+2}}(\R^n)} \|c_{\varepsilon}\|_{L^{\frac{2n}{n-2}}(\R^n)}\nonumber\\
\leq&\|\rho_{\varepsilon}\|_{L^{\frac{2n}{n+2}}(\R^n)} \|c_{\varepsilon}\|_{L^{\frac{2n}{n-2}}(\R^n)}\nonumber\\
 \leq& \dfrac{1}{2S_n} \|\rho_{\varepsilon}\|_{L^{\frac{2n}{n+2}}(\R^n)}^2 +\dfrac{S_n}{2}\|c_{\varepsilon}\|_{L^{\frac{2n}{n-2}}(\R^n)}^2.
\end{align*}
Therefore, we decompose the free energy into the following form
\begin{align*}
\mathcal F_{\varepsilon}(\rho_{\varepsilon},c_{\varepsilon})= &\dint_{\R^n} \Big(\frac{1}{m-1}((\rho_{\varepsilon}+\varepsilon)^m-\varepsilon^{m}) -\big(\rho_{\varepsilon}\ast J_{\varepsilon}\big) c_{\varepsilon} +\dfrac{1}{2}(|\nabla c_{\varepsilon}|^2+ |c_{\varepsilon}|^2)\Big)dx\nonumber\\
 \geq & \dfrac{1}{m-1}\|\rho_{\varepsilon}\|_{L^{m}(\R^n)}^m -\dfrac{1}{2S_n} \|\rho_{\varepsilon}\|_{L^{\frac{2n}{n+2}}(\R^n)}^2 -\dfrac{S_n}{2}\|c_{\varepsilon}\|_{L^{\frac{2n}{n-2}}(\R^n)}^2+ \frac{1}{2}\|\nabla c_{\varepsilon}\|_{L^{2}(\R^n)}^2\nonumber\\
 =& \left(\dfrac{1}{m-1}\|\rho_{\varepsilon}\|_{L^{m}(\R^n)}^m -\dfrac{1}{2S_n} \|\rho_{\varepsilon}\|_{L^{\frac{2n}{n+2}}(\R^n)}^2\right)+ \left(-\dfrac{S_n}{2}\|c_{\varepsilon}\|_{L^{\frac{2n}{n-2}}(\R^n)}^2+ \frac{1}{2}\|\nabla c_{\varepsilon}\|_{L^{2}(\R^n)}^2\right)\nonumber\\
 =:& \mathcal F_1(\rho_{\varepsilon})+\mathcal F_2(c_{\varepsilon}).
\end{align*}
By Sobolev's inequality \eqref{Sobolev}, we know that $\mathcal F_2(c_{\varepsilon})\geq 0$. Now we only need to estimate $\mathcal F_1(\rho_{\varepsilon})$. Since $1<\frac{2n}{n+2}<m$, by the interpolation inequality we have
$$
\|\rho_{\varepsilon}\|_{L^{\frac{2n}{n+2}}(\R^n)}\leq \|\rho_{\varepsilon}\|^{1-\theta}_{L^{1}(\R^n)}\|\rho_{\varepsilon}\|^{\theta}_{L^{m}(\R^n)}, \quad \theta=\frac{(n-2)m}{2n(m-1)}.
$$
Hence
\begin{align}
\nonumber \mathcal F_1(\rho_{\varepsilon})&=\dfrac{1}{m-1}\|\rho_{\varepsilon}\|_{L^{m}(\R^n)}^m -\dfrac{1}{2S_n} \|\rho_{\varepsilon}\|_{L^{\frac{2n}{n+2}}(\R^n)}^2 \nonumber\\
&\geq  \dfrac{1}{m-1} M_0^{\frac{2n-m(n+2)}{n-2}} \|\rho_{\varepsilon}\|_{L^{\frac{2n}{n+2}}(\R^n)}^{\frac{2n(m-1)}{n-2}} -\dfrac{1}{2S_n} \|\rho_{\varepsilon}\|_{L^{\frac{2n}{n+2}}(\R^n)}^2.\label{lowerbdF1}
\end{align}
Denote
\begin{eqnarray} \label{3}
f(s):=\frac{1}{m-1} M_0^{\frac{2n-m(n+2)}{n-2}} s-\dfrac{1}{2S_n} s^{\frac{n-2}{n(m-1)}}.
\end{eqnarray}
We are now ready to utilize the decomposition of the free energy in obtaining the uniform bound of the $L^{\frac{2n}{n+2}}$ norm of $\rho_{\varepsilon}$.

The assumption $\mathcal F(\rho_{0},c_{0})<f(s^*)$ in Theorem \ref{thm} implies that there is a small number $0<\eta\leq f(s^*)-\mathcal F(\rho_{0},c_{0})$ such that $\mathcal F(\rho_{0},c_{0})\leq f(s^*)-\eta$. Hence for $0<\varepsilon\ll 1$, we have $\mathcal F(\rho_{0,\varepsilon},c_{0,\varepsilon})\leq f(s^*)-\frac{\eta}{2}$.

\begin{lem}\label{4}
Assume that $\rho_{0,\varepsilon}\in L^{1}(\R^n)$ and $\mathcal F_{\varepsilon}(\rho_{0,\varepsilon},c_{0,\varepsilon})\leq f(s^*)-\frac{\eta}{2}$, then the following holds
\begin{enumerate}[(i)]
\item If $\|\rho_{0,\varepsilon}\|_{L^{\frac{2n}{n+2}}(\R^n)}<(s^*)^{\frac{n-2}{2n(m-1)}}$, then $\exists\mu_1<1$ such that
\begin{eqnarray}\label{upbound}
\|\rho_{\varepsilon}(\cdot,t)\|_{L^{\frac{2n}{n+2}}(\R^n)} <(\mu_1s^*)^{\frac{n-2}{2n(m-1)}}, \quad \forall t>0;
\end{eqnarray}
\item If $\|\rho_{0,\varepsilon}\|_{\frac{2n}{n+2}}>(s^*)^{\frac{n-2}{2n(m-1)}}$, then $\exists\mu_2>1$ such that
\begin{eqnarray*}
\|\rho_{\varepsilon}(\cdot,t)\|_{L^{\frac{2n}{n+2}}(\R^n)} >(\mu_2s^*)^{\frac{n-2}{2n(m-1)}}, \quad \forall t>0,
\end{eqnarray*}
\end{enumerate}
where $s^*$ is the maximum point of the function $f(s)$.
\end{lem}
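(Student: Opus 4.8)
The plan is to turn the free-energy decomposition obtained above into an a priori trapping of the quantity $s(t):=\|\rho_\varepsilon(\cdot,t)\|_{L^{2n/(n+2)}(\mathbb R^n)}^{2n(m-1)/(n-2)}$ inside a single connected component of a sublevel set of the one-variable function $f$ from \eqref{3}. First I would use the energy--dissipation identity for the regularized system, which makes $t\mapsto \mathcal F_\varepsilon(\rho_\varepsilon(\cdot,t),c_\varepsilon(\cdot,t))$ non-increasing; hence for every $t\ge 0$ one has $\mathcal F_\varepsilon(\rho_\varepsilon(\cdot,t),c_\varepsilon(\cdot,t))\le \mathcal F_\varepsilon(\rho_{0,\varepsilon},c_{0,\varepsilon})\le f(s^*)-\tfrac{\eta}{2}$. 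On the other hand, the decomposition $\mathcal F_\varepsilon=\mathcal F_1(\rho_\varepsilon)+\mathcal F_2(c_\varepsilon)$ together with $\mathcal F_2(c_\varepsilon)\ge 0$ (Sobolev's inequality \eqref{Sobolev}) and the interpolation bound \eqref{lowerbdF1} gives $\mathcal F_\varepsilon(\rho_\varepsilon(\cdot,t),c_\varepsilon(\cdot,t))\ge f(s(t))$, the exponent defining $s(t)$ being chosen precisely so that $s(t)^{(n-2)/(n(m-1))}=\|\rho_\varepsilon(\cdot,t)\|_{L^{2n/(n+2)}(\mathbb R^n)}^2$. Combining the two estimates yields $f(s(t))\le f(s^*)-\tfrac{\eta}{2}$ for all $t\ge 0$.

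Next I would record the shape of $f(s)=as-bs^{\gamma}$ with $a=\tfrac{1}{m-1}M_0^{(2n-m(n+2))/(n-2)}>0$, $b=\tfrac{1}{2S_n}>0$ and $\gamma=\tfrac{n-2}{n(m-1)}$. Because $m_c<m<m^*$ we have $\gamma>1$, so $f(0)=0$, $f$ is strictly positive and strictly increasing on $(0,s^*]$, strictly decreasing on $[s^*,\infty)$, and $f(s)\to-\infty$; its unique maximum is $f(s^*)=\mathcal F^*>0$, with $s^*$ exactly the value in the statement. Hence, for any level $c<f(s^*)$ the sublevel set $\{s\ge 0:\ f(s)\le c\}$ is contained in $[0,s_1]\cup[s_2,\infty)$ with $0\le s_1<s^*<s_2$ and $f>c$ on $(s_1,s_2)$; moreover $s_1>0$ as soon as $c>0$. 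I would apply this with $c=f(s^*)-\tfrac{\eta}{2}$.

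Finally I would run the continuity argument. The regularized solution being smooth, $t\mapsto s(t)$ is continuous on $[0,\infty)$, and by the first paragraph its range is contained in $\{s\ge 0:\ f(s)\le c\}\subseteq[0,s_1]\cup[s_2,\infty)$; since a connected set cannot meet both disjoint pieces, $s(\cdot)$ stays in whichever one contains $s(0)$. In case (i), $\|\rho_{0,\varepsilon}\|_{L^{2n/(n+2)}(\mathbb R^n)}<(s^*)^{(n-2)/(2n(m-1))}$ means $s(0)<s^*$; since $f>0$ on $(0,s^*]$ this forces $0<f(s(0))\le c$, so $c>0$, $s_1\in(0,s^*)$, and monotonicity of $f$ on $[0,s^*]$ gives $s(0)\le s_1$. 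Therefore $s(t)\le s_1<s^*$ for all $t>0$, and any $\mu_1\in(s_1/s^*,1)$ yields \eqref{upbound}. In case (ii), $s(0)>s^*$; since $f$ is decreasing on $[s^*,\infty)$ and $f(s(0))\le c=f(s_2)$ we get $s(0)\ge s_2$, hence $s(t)\ge s_2>s^*$ for all $t>0$, and any $\mu_2\in(1,s_2/s^*)$ gives the claimed lower bound. I expect the only delicate bookkeeping to be ensuring that the sublevel set genuinely splits and that the final inequalities come out strict; this is exactly what the positivity of $f$ on $(0,s^*]$ (forced, in case (i), by the hypothesis) and the choice of $\mu_i$ strictly between the relevant ratios take care of, so the remainder is routine.
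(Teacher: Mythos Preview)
Your proposal is correct and follows essentially the same route as the paper: both combine the energy--dissipation monotonicity with the decomposition $\mathcal F_\varepsilon\ge \mathcal F_1(\rho_\varepsilon)\ge f(s(t))$ (using $\mathcal F_2\ge 0$ from Sobolev and the interpolation bound \eqref{lowerbdF1}), analyze the concave profile of $f$ with its unique maximum at $s^*$, and then trap $s(t)$ on one side of $s^*$. Your write-up is in fact more explicit than the paper's about the continuity argument and the structure of the sublevel set $\{f\le c\}$; the paper compresses this into the single sentence ``there exists a $\mu_1<1$ such that \ldots'' after introducing a $\delta<1$ with $\mathcal F_\varepsilon(\rho_{0,\varepsilon},c_{0,\varepsilon})<\delta f(s^*)$, which is equivalent to your choice of level $c=f(s^*)-\tfrac{\eta}{2}$.
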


\begin{proof}

By (\ref{lowerbdF1}) and (\ref{3}), we have
\begin{eqnarray}\label{lowerbdF11}
\mathcal F_1(\rho_{\varepsilon})\geq f\left(\|\rho_{\varepsilon}\|_{L^{\frac{2n}{n+2}}}^{\frac{2n(m-1)}{n-2}}\right).
\end{eqnarray}
Notice that $\frac{2n}{2+n}<m<2-\frac{2}{n}$ implies $\frac{n-2}{n(m-1)}>1$. Hence we know that $f(s)$ is a strictly concave function in $0<s<\infty$, and the derivation of $f(s)$ is given by
\begin{eqnarray*}
f'(s)=\frac{1}{m-1} M_0^{\frac{2n-m(n+2)}{n-2}} -\dfrac{1}{2S_n} \frac{n-2}{n(m-1)}s^{\frac{n-2}{n(m-1)}-1}.
\end{eqnarray*}
Thus
\begin{eqnarray*}
s^{*}=\left(2S_n\frac{n}{n-2} M_0^{\frac{2n-m(n+2)}{n-2}}\right)^{\frac{n(m-1)}{2n-2-mn}}
\end{eqnarray*}
is the maximum point of $f(s)$, and $f(s)$ is monotone increasing for $0<s<s^{*}$, while $f(s)$ is monotone decreasing for $s>s^{*}$.

In the case that initial free energy $\mathcal F_{\varepsilon}(\rho_{0,\varepsilon},c_{\varepsilon})\leq f(s^{*})-\frac{\eta}{2}$, there is a $0<\delta<1$ such that $\mathcal F_{\varepsilon}(\rho_{0,\varepsilon},c_{0,\varepsilon})<\delta f(s^{*})$.

Using (\ref{lowerbdF11}), the decomposition of the free energy, the monotonicity of the free energy and our assumptions, we have
\begin{eqnarray*}
f(\|\rho_{\varepsilon}\|_{L^{\frac{2n}{n+2}}}^{\frac{2n(m-1)}{n-2}})\leq \mathcal F_1(\rho_{\varepsilon})\leq \mathcal F_{\varepsilon}(\rho_{\varepsilon},c_{\varepsilon})\leq \mathcal F_{\varepsilon}(\rho_{0,\varepsilon},c_{0,\varepsilon})\leq\delta f(s^{*}).
\end{eqnarray*}
If $\|\rho_{0,\varepsilon}\|_{L^{\frac{2n}{n+2}}(\R^n)}<(s^*)^{\frac{n-2}{2n(m-1)}}$, due to the fact that $f(s)$ is increasing in $0<s<s^{*}$, there exists a $\mu_{1}<1$ such that
\begin{eqnarray*}
 \|\rho_{\varepsilon}(\cdot,t)\|_{L^{\frac{2n}{n+2}}(\R^n)} <(\mu_1s^*)^{\frac{n-2}{2n(m-1)}}, \quad \forall t>0.
\end{eqnarray*}

Inversely, if $\|\rho_0\|^{\frac{2n(m-1)}{n-2}}_{L^{\frac{2n}{n+2}}}>s^*$, then the increasing property of $f(s)$ in $s>s^*$ implies that there exists a constant $\mu_2>1$ such that  $\|\rho_{\varepsilon}(\cdot,t)\|_{L^{\frac{2n}{n+2}}}>(\mu_2 s^*)^{\frac{n-2}{2n(m-1)}}$.
\end{proof}
\begin{rem}
Lemma \ref{4} gives us a hint that $(s^*)^{\frac{n-2}{2n(m-1)}}$ would be expected to be the sharp initial criterion for parabolic-parabolic Keller-Segel model (\ref{equ1})-(\ref{initialdata}). The fact has been proved in the parabolic-elliptic case in \cite{CW}.
\end{rem}

\subsection{Uniform estimates}
We focus on the uniform estimates of the regularized solutions in this
subsection. Using the $L^{\frac{2n}{n+2}}$ bound of $\rho_{\varepsilon}$ obtained in above subsection, we
show the uniform $L^p$ estimate by using standard method. Furthermore, the
uniform estimates for space and time derivatives will be derived carefully.

Now we prove the uniform $L^{p}$ estimate of regularized solutions $\rho_{\varepsilon}$ for any $p>1$.
\begin{lem}\label{5}
Assume that $\rho_{0,\varepsilon}\in L^{1}(\R^n)\cap L^{p}(\R^n)$, $p>1$, $\|\rho_{0,\varepsilon}\|_{L^{\frac{2n}{n+2}}(\R^n)}<(s^{*})^{\frac{n-2}{2n(m-1)}}$, $\mathcal F_{\varepsilon}(\rho_{0,\varepsilon},c_{0,\varepsilon})\leq f(s^{*})-\frac{\eta}{2}$ and $\nabla c_{0,\varepsilon}\in L^{\infty}(\mathbb{R}^n)$. Let $\rho_{\varepsilon}$ be a smooth solution of the regularized problem. Then
\begin{eqnarray}
&&\|\rho_{\varepsilon}\|_{L^{\infty}(0,\infty;L^{p}(\R^n))}\leq C,\label{Lp}\\
&&\|\nabla c_{\varepsilon}\|_{L^\infty(0,\infty; L^{\gamma}(\mathbb{R}^n))}\leq C, \quad 1\leq \gamma\leq \infty.\label{nablac}
\end{eqnarray}
Moreover, for any fixed $T>0$, it holds that
\begin{eqnarray}
&&\|\rho_{\varepsilon}\|_{L^{p+1}(0,T;L^{p+1}(\R^n))}\leq C,\label{p+1}\\
&&\big\|\nabla\rho_{\varepsilon}^{\frac{m+p-1}{2}}\big\|_{L^{2}(0,T;L^{2}(\R^n))}\leq C.\label{nablarho}
\end{eqnarray}
Here $C$ is a constant independent of $\varepsilon$.
\end{lem}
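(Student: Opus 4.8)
The plan is a two‑stage argument: a uniform‑in‑time $L^m$ bound extracted from the free energy, followed by a bootstrap in $p$ run from the $L^p$‑differential inequality together with the semigroup bounds \eqref{cLp}--\eqref{nablacLinfty} for $\nabla c_\varepsilon$. For the first stage, note that the hypotheses here are exactly those of Lemma~\ref{4}, so part~(i) gives $\|\rho_\varepsilon(\cdot,t)\|_{L^{\frac{2n}{n+2}}(\R^n)}<(\mu_1 s^*)^{\frac{n-2}{2n(m-1)}}$ for all $t>0$, uniformly in $\varepsilon$. Plugging this into the free‑energy decomposition $\mathcal F_\varepsilon(\rho_\varepsilon,c_\varepsilon)\ge\mathcal F_1(\rho_\varepsilon)+\mathcal F_2(c_\varepsilon)$, together with $\mathcal F_2(c_\varepsilon)\ge0$ (Sobolev's inequality) and the monotonicity $\mathcal F_\varepsilon(\rho_\varepsilon,c_\varepsilon)\le\mathcal F_\varepsilon(\rho_{0,\varepsilon},c_{0,\varepsilon})\le f(s^*)-\frac\eta2$, yields $\frac1{m-1}\|\rho_\varepsilon(\cdot,t)\|_{L^m(\R^n)}^m\le f(s^*)-\frac\eta2+\frac1{2S_n}\|\rho_\varepsilon(\cdot,t)\|_{L^{\frac{2n}{n+2}}(\R^n)}^2\le C$; hence $\|\rho_\varepsilon\|_{L^\infty(0,\infty;L^m(\R^n))}\le C$ and, interpolating with mass conservation, $\|\rho_\varepsilon\|_{L^\infty(0,\infty;L^q(\R^n))}\le C$ for $1\le q\le m$, uniformly in $\varepsilon$.

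\textbf{Stage 2: the $L^p$ bootstrap.} Testing \eqref{regequ1} with $p\rho_\varepsilon^{p-1}$, bounding the diffusion term from above via $(\rho_\varepsilon+\varepsilon)^{m-1}\ge\rho_\varepsilon^{m-1}$ (discarding the nonnegative $\varepsilon$‑remainder) and splitting $\rho_\varepsilon^{p-2}(\rho_\varepsilon+\varepsilon)\nabla\rho_\varepsilon=\rho_\varepsilon^{p-1}\nabla\rho_\varepsilon+\varepsilon\rho_\varepsilon^{p-2}\nabla\rho_\varepsilon$ in the drift term gives
\[
\frac{d}{dt}\int_{\R^n}\rho_\varepsilon^p\,dx\le-\frac{4mp(p-1)}{(m+p-1)^2}\int_{\R^n}\bigl|\nabla\rho_\varepsilon^{\frac{m+p-1}{2}}\bigr|^2\,dx+p(p-1)\int_{\R^n}\rho_\varepsilon^{p-1}|\nabla\rho_\varepsilon|\,\bigl|(\nabla c_\varepsilon)\ast J_\varepsilon\bigr|\,dx+(\varepsilon\text{-terms}).
\]
Writing $\rho_\varepsilon^{p-1}|\nabla\rho_\varepsilon|=\frac{2}{m+p-1}\rho_\varepsilon^{\frac{p-m+1}{2}}\bigl|\nabla\rho_\varepsilon^{\frac{m+p-1}{2}}\bigr|$, using $\|(\nabla c_\varepsilon)\ast J_\varepsilon\|_{L^\gamma}\le\|\nabla c_\varepsilon\|_{L^\gamma}$ and Young's inequality, the drift term (and the $\varepsilon$‑terms) is absorbed into half the dissipation plus a term $C\int_{\R^n}\rho_\varepsilon^{p-m+1}|\nabla c_\varepsilon|^2\,dx$. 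The factor $\nabla c_\varepsilon$ is controlled by \eqref{nablacLp}--\eqref{nablacLinfty} applied with $f=\rho_\varepsilon\ast J_\varepsilon$ and the integrability of $\rho_\varepsilon$ available so far: from $\rho_\varepsilon\in L^\infty(0,\infty;L^m(\R^n))$ one gets $\nabla c_\varepsilon\in L^\infty(0,\infty;L^\gamma(\R^n))$ uniformly for $2\le\gamma<\frac{mn}{n-m}$, where $\frac{mn}{n-m}>2$ is precisely the hypothesis $m>\frac{2n}{n+2}$. Hölder's inequality writes $\int\rho_\varepsilon^{p-m+1}|\nabla c_\varepsilon|^2\le\|\rho_\varepsilon\|_{L^s}^{p-m+1}\|\nabla c_\varepsilon\|_{L^{2\sigma'}}^2$ for a suitable conjugate pair, and the Gagliardo--Nirenberg--Sobolev inequality applied to $u:=\rho_\varepsilon^{\frac{m+p-1}{2}}$ interpolates $\|\rho_\varepsilon\|_{L^s}$ between a lower norm already under control and $\|\nabla u\|_{L^2}$; a further Young step absorbs part of $\|\nabla u\|_{L^2}^2$, while the coercivity $\int\rho_\varepsilon^p=\|u\|_{L^2}^2\le C\|\nabla u\|_{L^2}^{2\lambda}\|u\|_{L^{q_*}}^{2(1-\lambda)}$ with $\lambda=\tfrac{n}{n+2}<1$ turns the inequality into
\[
\frac{d}{dt}\,y(t)\le-\kappa\,y(t)^{1+\sigma}+C,\qquad y(t):=\int_{\R^n}\rho_\varepsilon^p(x,t)\,dx,
\]
with $\kappa,\sigma,C>0$ independent of $\varepsilon$ and $t$; since the right‑hand side is negative when $y$ is large, $y(t)\le\max\{y(0),(C/\kappa)^{1/(1+\sigma)}\}\le C$ for all $t$, which is \eqref{Lp} in the range of $p$ accessible at this stage. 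Enlarging the admissible $\gamma$ for $\nabla c_\varepsilon$ as $\rho_\varepsilon$ gains integrability and iterating finitely many times---until $p\ge n$, where \eqref{nablacLinfty} gives $\nabla c_\varepsilon\in L^\infty(0,\infty;L^\infty(\R^n))$ with bound controlled by $M_0,\|\rho_0\|_{L^\infty(\R^n)},\|\nabla c_0\|_{L^\infty(\R^n)}$---establishes \eqref{Lp} for every $p>1$; then \eqref{nablac} follows from \eqref{cLp}--\eqref{nablacLinfty} and interpolation, using $\rho_\varepsilon\in L^\infty(0,\infty;L^q(\R^n))$ for all $q\ge1$.

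\textbf{Space--time bounds.} Integrating the $L^p$‑inequality over $(0,T)$ and bounding its right‑hand side on the finite interval by \eqref{Lp} and \eqref{nablac} gives $\int_0^T\bigl\|\nabla\rho_\varepsilon^{\frac{m+p-1}{2}}\bigr\|_{L^2(\R^n)}^2\,dt\le C$, which is \eqref{nablarho}. For \eqref{p+1} I would write $\rho_\varepsilon^{p+1}=\bigl(\rho_\varepsilon^{\frac{m+p-1}{2}}\bigr)^2\rho_\varepsilon^{2-m}$, apply Hölder's inequality with exponents $\bigl(\tfrac{n}{n-2},\tfrac n2\bigr)$ and then Sobolev's inequality \eqref{Sobolev}:
\[
\int_{\R^n}\rho_\varepsilon^{p+1}\,dx\le\bigl\|\rho_\varepsilon^{\frac{m+p-1}{2}}\bigr\|_{L^{\frac{2n}{n-2}}(\R^n)}^2\|\rho_\varepsilon\|_{L^{\frac{n(2-m)}{2}}(\R^n)}^{2-m}\le\frac1{S_n}\bigl\|\nabla\rho_\varepsilon^{\frac{m+p-1}{2}}\bigr\|_{L^2(\R^n)}^2\|\rho_\varepsilon\|_{L^{\frac{n(2-m)}{2}}(\R^n)}^{2-m};
\]
here $\tfrac{n(2-m)}{2}\ge1$ exactly because $m<2-\tfrac2n$, so $\|\rho_\varepsilon\|_{L^{n(2-m)/2}(\R^n)}\le C$ by \eqref{Lp} and mass conservation, and integrating in $t\in(0,T)$ together with \eqref{nablarho} yields \eqref{p+1}.

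\textbf{Main obstacle.} The delicate part is the bookkeeping inside the bootstrap: at each step the Hölder pair and the Gagliardo--Nirenberg exponents must be chosen so that \emph{simultaneously} the exponent $2\sigma'$ of $\nabla c_\varepsilon$ stays below the currently available threshold, the interpolated power of $\rho_\varepsilon$ lies below the integrability already proven, and the exponent of $\|\nabla u\|_{L^2}$ produced by the interpolation is strictly less than $2$, so that Young's inequality applies and the iteration genuinely gains integrability; it is here, and in the identity $\rho^{p+1}=(\rho^{\frac{m+p-1}{2}})^2\rho^{2-m}$ used for \eqref{p+1}, that the full range $\frac{2n}{n+2}<m<2-\frac2n$ is consumed. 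One must also check that the discarded $\varepsilon$‑remainder from the diffusion term and the lower‑order $\varepsilon$‑drift remainders are controlled by $\varepsilon$‑independent constants, so that every bound above is uniform in $\varepsilon$ as claimed.
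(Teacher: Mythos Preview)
Your Stage~1 is fine, and the endgame for \eqref{p+1} via the identity $\rho^{p+1}=(\rho^{\frac{m+p-1}{2}})^2\rho^{2-m}$ is a clean alternative to the paper's interpolation. The gap is in Stage~2: the finite-exponent bootstrap for $\nabla c_\varepsilon$ does not get off the ground anywhere in the range $\frac{2n}{n+2}<m<2-\frac2n$. From $\rho_\varepsilon\in L^\infty_tL^m_x$ and \eqref{nablacLp} you obtain $\nabla c_\varepsilon\in L^\infty_tL^\gamma_x$ only for $\gamma<\frac{mn}{n-m}$; your H\"older split then forces $2\sigma'<\frac{mn}{n-m}$, hence $\sigma>\frac{mn}{m(n+2)-2n}$ and $s=(p-m+1)\sigma>\frac{(p-m+1)\,mn}{m(n+2)-2n}$. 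For Gagliardo--Nirenberg with upper endpoint $\frac{n(m+p-1)}{n-2}$ to apply you need $s\le\frac{n(m+p-1)}{n-2}$; combining the two constraints and simplifying gives $(n-2m)\,p\le n(m-1)^2$. Since $m<2-\frac2n\le\frac n2$, this caps $p$ at $p_{\max}=\frac{n(m-1)^2}{n-2m}$, and one checks $p_{\max}<m$ exactly when $(n+2)m^2-3nm+n<0$. Evaluating this quadratic at the endpoints yields $Q\big(\tfrac{2n}{n+2}\big)=\tfrac{n(2-n)}{n+2}<0$ and $Q\big(2-\tfrac2n\big)=-n+6-\tfrac{12}{n}+\tfrac{8}{n^2}<0$ for every $n\ge3$, so $Q<0$ on the \emph{entire} admissible interval: no choice of conjugate pair allows even a single bootstrap step, and the iteration is stuck at $L^m$. (Concretely, for $n=3$, $m=1.3$: $\frac{mn}{n-m}\approx2.29$, so $\sigma'<1.15$, $\sigma>7.8$, $s>7.8(p-0.3)$, while the Sobolev ceiling is only $3(p+0.3)$; no $p>1$ survives.)

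The paper resolves this chicken-and-egg problem not by iteration but by a self-referential closure. It bounds the drift with $\|\nabla c_\varepsilon\|_{L^\infty}$ from the outset via \eqref{nablacLinfty}, which involves $\|\rho_\varepsilon\|_{L^\infty_tL^q_x}$ for a fixed $n<q<p$; that $L^q$ norm is then interpolated between the $L^{\frac{2n}{n+2}}$ bound of Lemma~\ref{4} and the still-unknown $L^p$ norm, see \eqref{ccontrol}. After the usual GN/Young steps one arrives at
\[
\frac{d}{dt}\|\rho_\varepsilon\|_{L^p}^p\le-\|\rho_\varepsilon\|_{L^p}^p+C\bigl(1+\|\rho_\varepsilon\|_{L^\infty_tL^p}^{\,\alpha}\bigr),
\]
and the key computation \eqref{xiaop} shows $\alpha<p$ for all sufficiently large $p$. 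Integrating and taking the supremum in $t$ then yields $\|\rho_\varepsilon\|_{L^\infty_tL^p}^p\le C+C\|\rho_\varepsilon\|_{L^\infty_tL^p}^{\alpha}$, which closes \eqref{Lp} in a single step; interpolation with mass fills in the remaining exponents, and only afterwards is \eqref{nablac} read off from \eqref{nablacLp}--\eqref{nablacLinfty}.
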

\begin{proof}
Multiplying the equation (\ref{regequ1}) by $p\rho_{\varepsilon}^{p-1}$ with $p>1$ and integrating it in space variable, we have for any  $t>0$
\begin{align}
 \frac{d}{dt}\dint_{\R^n}\rho_{\varepsilon}^p(x,t) dx
=&-pm(p-1)\dint_{\R^n}(\rho_{\varepsilon}+\varepsilon)^{m-1}\rho_{\varepsilon}^{p-2}|\nabla\rho_{\varepsilon}|^{2}dx\label{lpest1}\\
&+(p-1)\dint_{\R^n}(\nabla \rho_{\varepsilon}^{p}*J_{\varepsilon})\cdot\nabla c_{\varepsilon}dx
+p\varepsilon\dint_{\R^n}(\nabla \rho_{\varepsilon}^{p-1}*J_{\varepsilon})\cdot\nabla c_{\varepsilon}dx.\nonumber
\end{align}
Noticing that $(\rho_{\varepsilon}+\varepsilon)^{m-1}>\rho_{\varepsilon}^{m-1}$ due to $m>1$ and using the integration by parts, we get from (\ref{lpest1}) that
\begin{align}
 \frac{d}{dt}\dint_{\R^n}\rho_{\varepsilon}^p(x,t) dx
 \leq&-pm(p-1)\dint_{\R^n}\rho_{\varepsilon}^{m+p-3}|\nabla\rho_{\varepsilon}|^{2}dx
+p\varepsilon\|\nabla c_{\varepsilon}\|_{L^{\infty}(\R^n)}\dint_{\R^n}|\nabla \rho_{\varepsilon}^{p-1}*J_{\varepsilon}|\,dx\nonumber\\
&+(p-1)\|\nabla c_{\varepsilon}\|_{L^{\infty}(\R^n)}\dint_{\R^n}|\nabla \rho_{\varepsilon}^{p}*J_{\varepsilon}|\,dx. \label{lpest2}
\end{align}
Using the H\"{o}lder inequality and a series of computations, we have
\begin{align}
\dint_{\R^n}|\nabla \rho_{\varepsilon}^{p-1}*J_{\varepsilon}|\,dx\label{converlution}
&\leq \dint_{\R^n}|\nabla \rho_{\varepsilon}^{p-1}|\,dx\nonumber\\
&=(p-1)\dint_{\R^n}\rho_{\varepsilon}^{p-2-\frac{m+p-3}{2}}\rho_{\varepsilon}^{\frac{m+p-3}{2}}|\nabla \rho_{\varepsilon}|\,dx\nonumber\\
&\leq (p-1)\big\|\rho_{\varepsilon}\big\|^{\frac{p-1-m}{2}}_{L^{p-1-m}(\R^n)}\big\|\rho_{\varepsilon}^{\frac{m+p-3}{2}}\nabla \rho_{\varepsilon}\big\|_{L^2(\R^n)}.
\end{align}
Similarly, we have
\begin{eqnarray}\label{converlution1}
\dint_{\R^n}|\nabla \rho_{\varepsilon}^{p}*J_{\varepsilon}|\,dx
\leq p\big\|\rho_{\varepsilon}\big\|^{\frac{p+1-m}{2}}_{L^{p+1-m}(\R^n)}\big\|\rho_{\varepsilon}^{\frac{m+p-3}{2}}\nabla \rho_{\varepsilon}\big\|_{L^2(\R^n)}.
\end{eqnarray}
Hence substituting (\ref{converlution}) and (\ref{converlution1}) into (\ref{lpest2}), we obtain
\begin{align}
 &\frac{d}{dt}\dint_{\R^n}\rho_{\varepsilon}^p(x,t) dx\nonumber\\
 \leq
&p(p-1)\|\nabla c_{\varepsilon}\|_{L^{\infty}(\R^n)}\big\|\rho_{\varepsilon}^{\frac{m+p-3}{2}}\nabla \rho_{\varepsilon}\big\|_{L^2(\R^n)}
\left(\varepsilon\big\|\rho_{\varepsilon}\big\|^{\frac{p-1-m}{2}}_{L^{p-1-m}(\R^n)}
+\big\|\rho_{\varepsilon}\big\|^{\frac{p+1-m}{2}}_{L^{p+1-m}(\R^n)}\right) \nonumber\\
&-pm(p-1)\dint_{\R^n}\rho_{\varepsilon}^{m+p-3}|\nabla\rho_{\varepsilon}|^{2}dx.\label{lpest3}
\end{align}
Using the Young inequality for the last term of (\ref{lpest3}), we deduce for any $\nu>0$
 \begin{align}
 \frac{d}{dt}\dint_{\R^n}\rho_{\varepsilon}^p(x,t) dx
 \leq&+\frac{p^2(p-1)^2
}{4\nu}\|\nabla c_{\varepsilon}\|^2_{L^{\infty}(\R^n)}
\left(\varepsilon\big\|\rho_{\varepsilon}\big\|^{\frac{p-1-m}{2}}_{L^{p-1-m}(\R^n)}
+\big\|\rho_{\varepsilon}\big\|^{\frac{p+1-m}{2}}_{L^{p+1-m}(\R^n)}\right)^2\nonumber\\
&-pm(p-1)\dint_{\R^n}\rho_{\varepsilon}^{m+p-3}|\nabla\rho_{\varepsilon}|^{2}dx+\nu\big\|\rho_{\varepsilon}^{\frac{m+p-3}{2}}\nabla \rho_{\varepsilon}\big\|^2_{L^2(\R^n)}.  \label{lpest4}
\end{align}
Taking $\nu=\frac{pm(p-1)}{4}$ in (\ref{lpest4}), we get
\begin{align}
 \frac{d}{dt}\dint_{\R^n}\rho_{\varepsilon}^p(x,t) dx
\leq &-\frac{3pm(p-1)}{(m+p-1)^2}\dint_{\R^n}\big|\nabla\rho^{\frac{m+p-1}{2}}_{\varepsilon}\big|^{2}dx \label{lpest5} \\
&+\frac{2p(p-1)}{m}\|\nabla c_{\varepsilon}\|^2_{L^{\infty}}
\left(\varepsilon^2\|\rho_{\varepsilon}\|^{p-1-m}_{L^{p-1-m}(\R^n)}+\|\rho_{\varepsilon}\|^{p+1-m}_{L^{p+1-m}(\R^n)}\right).\nonumber
\end{align}
Notice the following facts:
\begin{enumerate}[(i)]
\item if $p>1+m+\frac{2n}{n+2}$, then
\begin{eqnarray}
\|\rho_{\varepsilon}\|_{L^{p-1-m}(\R^n)}\leq (S_n)^{-\frac{\theta_1}{m+p-1}}\|\rho_{\varepsilon}\|^{1-\theta_1}_{L^{\frac{2n}{n+2}}(\R^n)}
\big\|\nabla\rho_{\varepsilon}^{\frac{m+p-1}{2}}\big\|^{\frac{2\theta_1}{m+p-1}}_{L^2(\R^n)},\label{formula3-1}
\end{eqnarray}
where $\theta_1=\left(\frac{n+2}{2n}-\frac{1}{p-1-m}\right)\Big/\left(\frac{n+2}{2n}-\frac{n-2}{n(m+p-1)}\right)$;
\item if $p>m+\frac{n-2}{n+2}$, then
\begin{eqnarray}
\|\rho_{\varepsilon}\|_{L^{p+1-m}(\R^n)}\leq (S_n)^{-\frac{\theta_2}{m+p-1}}\|\rho_{\varepsilon}\|^{1-\theta_2}_{L^{\frac{2n}{n+2}}(\R^n)}
\big\|\nabla\rho_{\varepsilon}^{\frac{m+p-1}{2}}\big\|^{\frac{2\theta_2}{m+p-1}}_{L^2(\R^n)},\label{formula3+1}
\end{eqnarray}
where $\theta_2=\left(\frac{n+2}{2n}-\frac{1}{p+1-m}\right)\Big/\left(\frac{n+2}{2n}-\frac{n-2}{n(m+p-1)}\right)$.
\end{enumerate}
Using the estimates (\ref{formula3-1}), (\ref{formula3+1}) and the uniform in time upper bound (\ref{upbound}) of $\|\rho_{\varepsilon}\|_{L^{\frac{2n}{n+2}}}$, we deduce form (\ref{lpest5}) that
\begin{align}\label{estimate}
 &\frac{d}{dt}\dint_{\R^n}\rho_{\varepsilon}^p(x,t) dx\nonumber\\
\leq &-\frac{3pm(p-1)}{(m+p-1)^2}\dint_{\R^n}\big|\nabla\rho^{\frac{m+p-1}{2}}_{\varepsilon}\big|^{2}dx \nonumber \\
&+C(m,n,p)\|\nabla c_{\varepsilon}\|^2_{L^{\infty}(\R^n)}
\left(\big\|\nabla\rho_{\varepsilon}^{\frac{m+p-1}{2}}\big\|^{\frac{2\theta_1(p-1-m)}{m+p-1}}_{L^2(\R^n)}
+\big\|\nabla\rho_{\varepsilon}^{\frac{m+p-1}{2}}\big\|^{\frac{2\theta_2(p+1-m)}{m+p-1}}_{L^2(\R^n)}\right).
\end{align}
Since $0<\theta_1,\theta_2<1$ and $m>1$, we know that
$$
0<\frac{\theta_1(p-1-m)}{m+p-1}, ~~\frac{\theta_2(p+1-m)}{m+p-1}<1,
$$
which allows us to utilize the Young inequality for the last two terms of (\ref{estimate}) such that it holds that
\begin{align}\label{estimate1}
 \frac{d}{dt}\dint_{\R^n}\rho_{\varepsilon}^p(x,t) dx
\leq &-\frac{3pm(p-1)}{(m+p-1)^2}\dint_{\R^n}\big|\nabla\rho^{\frac{m+p-1}{2}}_{\varepsilon}\big|^{2}dx \nonumber \\
&+C(m,n,p)\|\nabla c_{\varepsilon}\|^{2q_1}_{L^{\infty}(\R^n)}
+\sigma_1\big\|\nabla\rho_{\varepsilon}^{\frac{m+p-1}{2}}\big\|^{\frac{2\theta_1(p-1-m)q_2}{m+p-1}}_{L^2(\R^n)}\nonumber\\
&+C(m,n,p)\|\nabla c_{\varepsilon}\|^{2\ell_1}_{L^{\infty}(\R^n)}
+\sigma_2\big\|\nabla\rho_{\varepsilon}^{\frac{m+p-1}{2}}\big\|^{\frac{2\theta_2(p+1-m)\ell_2}{m+p-1}}_{L^2(\R^n)},
\end{align}
where $q_1,q_2>1$ satisfy $\frac{1}{q_1}+\frac{1}{q_2}=1$, $\ell_1,\ell_2>1$ satisfy $\frac{1}{\ell_1}+\frac{1}{\ell_2}=1$. Setting
\begin{eqnarray}\label{constant2}
\sigma_1=\sigma_2=\frac{pm(p-1)}{(m+p-1)^2}, \quad \frac{2\theta_1(p-1-m)q_2}{m+p-1}=2,\quad \frac{2\theta_2(p+1-m)\ell_2}{m+p-1}=2,
\end{eqnarray}
from (\ref{estimate1}) we have that
\begin{align}\label{estimate2}
 &\frac{d}{dt}\int_{\R^n}\rho_{\varepsilon}^p(x,t) dx\nonumber\\
\leq &-C_1\big\|\nabla\rho^{\frac{m+p-1}{2}}_{\varepsilon}\big\|_{L^2(\R^n)}^{2}+C(m,n,p)\Big(\|\nabla c_{\varepsilon}\|^{2q_1}_{L^{\infty}(\R^n)}+\|\nabla c_{\varepsilon}\|^{2\ell_1}_{L^{\infty}(\R^n)}\Big),
\end{align}
where $0<C_1<\frac{pm(p-1)}{(m+p-1)^2}$ is a constant independent of $p$.

On the other hand, using (\ref{nablacLinfty}) for $c_{\varepsilon}$ in (\ref{regequ2}) and (\ref{upbound}), we have
\begin{align}
\|\nabla c_{\varepsilon}\|^2_{L^{\infty}(\R^n)}&\leq 2\left(\|\nabla c_0\|^2_{L^{\infty}(\R^n)}+\left(B_{\infty,q,n} ~\Gamma \left(\frac{1}{2}-\frac{n}{2q}\right)\right)^2\|\rho_{\varepsilon}\|^2_{L^{\infty}(0,\infty;~L^q(\R^n))}\right)\nonumber\\
&\leq 2\|\nabla c_0\|^2_{L^{\infty}(\R^n)}+C(m,n,q)\|\rho_{\varepsilon}\|^{2(1-\theta_3)}_{L^{\infty}\big(0,\infty; L^{\frac{2n}{n+2}}(\R^n)\big)}
\|\rho_{\varepsilon}\|^{2\theta_3}_{L^{\infty}(0,\infty;~L^p(\R^n))}\nonumber\\
&\leq 2\|\nabla c_0\|^2_{L^{\infty}(\R^n)}+C(m,n,p)
\|\rho_{\varepsilon}\|^{2\theta_3}_{L^{\infty}(0,\infty;~L^p(\R^n))},\label{ccontrol}
\end{align}
where $n<q<p$ is an exponent independent of $p$, which will be chosen later, and $\theta_3$ satisfies
\begin{align}\label{theta3}
\theta_3=\left(\frac{n+2}{2n}-\frac{1}{q}\right)\Big/\left(\frac{n+2}{2n}-\frac{1}{p}\right).
\end{align}
Hence (\ref{estimate2}) and (\ref{ccontrol}) imply that
\begin{align}\label{estimate3}
 \frac{d}{dt}\dint_{\R^n}\rho_{\varepsilon}^p(x,t) dx
\leq& -C_1\big\|\nabla\rho^{\frac{m+p-1}{2}}_{\varepsilon}\big\|_{L^2(\R^n)}^{2}+C(m,n,p)\Big(\|\nabla c_0\|^{2q_1}_{L^{\infty}(\R^n)}+\|\nabla c_0\|^{2\ell_1}_{L^{\infty}(\R^n)}\Big)\nonumber\\
&+C(m,n,p)\Big(\|\rho_{\varepsilon}\|^{2q_1\theta_3}_{L^{\infty}(0,\infty; L^p(\R^n))}+\|\rho_{\varepsilon}\|^{2\ell_1\theta_3}_{L^{\infty}(0,\infty; L^p(\R^n))}\Big).
\end{align}

From (\ref{constant2}), it can be computed that
\begin{align}\label{ell1}
\ell_1=\frac{m+p-1}{(m+p-1)-\theta_2(p+1-m)},~~ \mbox{and } 1<q_1<\ell_1.
\end{align}
Hence there is a constant $C(m,n,p)>1$ such that (\ref{estimate3}) can be recast as
\begin{align}\label{estimate4}
 \frac{d}{dt}\dint_{\R^n}\rho_{\varepsilon}^p(x,t) dx
\leq& -C_1\big\|\nabla\rho^{\frac{m+p-1}{2}}_{\varepsilon}\big\|_{L^2(\R^n)}^{2}+C(m,n,p)
\Big(1+\|\rho_{\varepsilon}\|^{2\ell_1\theta_3}_{L^{\infty}(0,\infty; L^p(\R^n))}\Big).
\end{align}
Due to $\frac{2n}{n+2}<m<2-\frac{2}{n}$, we have $n<\frac{2n}{n-(m-1)(n+2)}$. Thus taking $n<q<\frac{2n}{n-(m-1)(n+2)}<p$, and using (\ref{ell1}) and the expression (\ref{theta3}) of $\theta_3$, we can derive that there is $p_0$ such that when $p>\max\{\frac{2n}{n-(m-1)(n+2)},p_0\}$, it holds that
\begin{align}\label{xiaop}
2\ell_1\theta_3=\frac{[(n+2)(m+p-1)-2(n-2)](n+2-2n/q)}{[(n+2)(m-1)+2](n+2-2n/p)}<p.
\end{align}

Furthermore, taking~$p>\frac{2n}{n+2}$, it is easily known that~$p<\frac{n(m+p-1)}{n-2}$. So, using the interpolation inequality, Sobolev's inequality, one has that
\begin{eqnarray}
\|\rho_{\varepsilon}\|^p_{L^{p}(\R^n)}&\leq &S_{n}^{-\frac{p\theta_4}{m+p-1}}\|\rho_{\varepsilon}\|_{L^{\frac{2n}{n+2}}(\R^n)}^{p(1-\theta_4)}
\big\|\nabla\rho_{\varepsilon}^{\frac{m+p-1}{2}}\big\|_{L^{2}(\R^n)}^{\frac{2\theta_4 p}{m+p-1}},
\label{29}
\end{eqnarray}
where $\theta_4$ satisfies~$\frac{1}{p}=\frac{(n+2)(1-\theta_4)}{2n}+\frac{(n-2)\theta_4}{n(m+p-1)}$.
 Noticing $\frac{2\theta_3 p}{m+p-1}<2$, and using (\ref{upbound}) and the Young inequality for (\ref{29}), it holds that for any $\nu_{2}>0$
\begin{eqnarray}\label{lp1}
\|\rho_{\varepsilon}\|_{L^{p}(\R^n)}^{p}&\leq&C(n,m,p)
+\nu_2\big\|\nabla\rho_{\varepsilon}^{\frac{m+p-1}{2}}\big\|_{L^{2}(\R^n)}^{2}.
\end{eqnarray}
Taking $\nu_{2}=C_1$, from~(\ref{estimate4})~and~(\ref{lp1}), we have
\begin{align}\label{estimate5}
 \frac{d}{dt}\|\rho_{\varepsilon}\|^p_{L^p(\R^n)}
\leq& -\|\rho_{\varepsilon}\|^p_{L^p(\R^n)}+C(m,n,p)
\Big(1+\|\rho_{\varepsilon}\|^{2\ell_1\theta_3}_{L^{\infty}(0,\infty; L^p(\R^n))}\Big).
\end{align}
Solving the above ordinary differential inequality (\ref{estimate5}), we obtain
\begin{align}\label{estimate6}
 \|\rho_{\varepsilon}\|^p_{L^p(\R^n)}
\leq& \|\rho_{\varepsilon0}\|^p_{L^p(\R^n)}+C(m,n,p)
\Big(1+\|\rho_{\varepsilon}\|^{2\ell_1\theta_3}_{L^{\infty}(0,\infty; L^p(\R^n))}\Big).
\end{align}
By (\ref{xiaop}), and taking the supremum of (\ref{estimate6}) for $t\in [0,\infty)$, we know that
$$
\|\rho_{\varepsilon}\|_{L^{\infty}(0,\infty; L^p(\R^n))}\leq C(m,n,p),\quad p>\max\{1+m+\frac{2n}{n+2},\frac{2n}{n-(m-1)(n+2)},p_0\}.
$$
Due to the conservation of mass, the interpolation inequality implies that for any $p>1$, (\ref{Lp}) holds true.

Furthermore, by (\ref{estimate4}) we deduce that for any fixed $T>0$, it holds that
\begin{eqnarray}\label{28}
 \sup_{t\in[0,T)}\dint_{\R^n}\rho_{\varepsilon}^p(x,t)dx +\frac{2pm(p-1)}{(m+p-1)^{2}}\int^T_0\dint_{\R^n}\big|\nabla\rho_{\varepsilon}^{\frac{m+p-1}{2}}\big|^{2}dxdt
 \leq C(T),
\end{eqnarray}
Thus for~$p>1$, we obtain that
\begin{eqnarray*}
\big\|\nabla\rho_{\varepsilon}^{\frac{m+p-1}{2}}\big\|_{L^{2}(0,T;L^{2}(\R^n))}\leq C(T).
\end{eqnarray*}

 Due to $p>1$, it can be easily checked that $p+1<\frac{n(m+p-1)}{n-2}$. Then the interpolation inequality tells us that
\begin{align}
\|\rho_{\varepsilon}\|_{L^{p+1}(\R^n)}&\leq \|\rho_{\varepsilon}\|_{L^{\frac{2n}{n+2}}(\R^n)}^{1-\theta}\|\rho_{\varepsilon}\|_{L^{\frac{n(m+p-1)}{n-2}}(\R^n)}^{\theta}\nonumber\\
&=\|\rho_{\varepsilon}\|_{L^{\frac{2n}{n+2}}(\R^n)}^{1-\theta}\big\|\rho_{\varepsilon}^{\frac{m+p-1}{2}}\big\|_{L^{\frac{2n}{n-2}}(\R^n)}^{\frac{2}{m+p-1}\theta},\label{ffc}
\end{align}
where~$\frac{1}{p+1}=\frac{(n+2)(1-\theta)}{2n}+\frac{(n-2)\theta}{n(m+p-1)}$. Using the Sobolev inequality for (\ref{ffc}), it follows that
\begin{eqnarray}
\|\rho_{\varepsilon}\|^{p+1}_{L^{p+1}(\R^n)}&\leq &S_{n}^{-\frac{(p+1)\theta}{m+p-1}}\|\rho_{\varepsilon}\|_{L^{\frac{2n}{n+2}}(\R^n)}^{(p+1)(1-\theta)}
\big\|\nabla\rho_{\varepsilon}^{\frac{m+p-1}{2}}\big\|_{L^{2}(\R^n)}^{\frac{2\theta (p+1)}{m+p-1}}.
\label{291}
\end{eqnarray}
Let $k:=\frac{2\theta (p+1)}{m+p-1}$. A simple computation shows $k<2$. Hence utilizing (\ref{nablarho}) and the Young inequality for (\ref{291}), we get
\begin{eqnarray*}
\|\rho_{\varepsilon}\|_{L^{p+1}(0,T;L^{p+1}(\R^n))}\leq C(T).
\end{eqnarray*}
This is the proof on (\ref{Lp}), (\ref{nablarho}) and (\ref{p+1}). In addition, we obtain easily that (\ref{nablac}) is a direct consequence of (\ref{nablacLp}) and (\ref{Lp}).

\end{proof}

\begin{rem}
 Let $ p=m$, we get the estimate used later
\begin{eqnarray*}\label{m}
 \|\rho_{\varepsilon}\|_{L^{\infty}(0,\infty;L^{m}(\R^n))}+ \|\rho_{\varepsilon}\|_{L^{m+1}(0,T;L^{m+1}(\R^n))}\leq C.
\end{eqnarray*}

\end{rem}

\subsection{Uniform estimates for the space and time derivatives}

The estimates on space and time derivative of $\rho_{\varepsilon}$ are two necessary conditions for compactness arguments. First, we will use the $L^p$ estimate that obtained above to prove the estimate on the space derivative.

\begin{lem}\label{lem3} Assume that the assumptions of Lemma \ref{5} hold, then for any fixed $T>0$, there exists a constant $C>0$ independent of $\varepsilon$ such that
\begin{eqnarray*}
&&\|\nabla\rho_{\varepsilon}\|_{L^{2}(0,T; L^{\frac{2m}{3-m}}(\mathbb{R}^n))}\leq C, \quad \mbox{ for } m< \frac{3}{2}, \label{est28}\\
&&\|\nabla\rho_{\varepsilon}\|_{L^{2}(0,T; L^{2}(\mathbb{R}^n))}\leq C, \quad \mbox{ for } m\geq \frac{3}{2}.\label{est29}
\end{eqnarray*}
\end{lem}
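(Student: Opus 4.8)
The plan is to control $\nabla\rho_\varepsilon$ by rewriting it in terms of the quantity $\nabla\rho_\varepsilon^{(m+p-1)/2}$ that Lemma \ref{5} already controls in $L^2(0,T;L^2)$, for a suitable choice of $p$. Writing $\nabla\rho_\varepsilon = \frac{2}{m+p-1}\,\rho_\varepsilon^{(3-m-p)/2}\,\nabla\rho_\varepsilon^{(m+p-1)/2}$, one applies H\"older's inequality in the space variable: for an exponent $a$ to be chosen,
\begin{equation*}
\|\nabla\rho_\varepsilon\|_{L^a(\mathbb R^n)} \le \frac{2}{m+p-1}\,\big\|\rho_\varepsilon^{(3-m-p)/2}\big\|_{L^b(\mathbb R^n)}\,\big\|\nabla\rho_\varepsilon^{(m+p-1)/2}\big\|_{L^2(\mathbb R^n)},\qquad \frac1a=\frac1b+\frac12 .
\end{equation*}
The factor $\big\|\rho_\varepsilon^{(3-m-p)/2}\big\|_{L^b}=\|\rho_\varepsilon\|_{L^{b(p+m-3)/2}}^{(p+m-3)/2}$ must be a \emph{uniformly bounded} $L^s$ norm of $\rho_\varepsilon$, which is guaranteed by the uniform $L^p$ estimate \eqref{Lp} of Lemma \ref{5} as long as $b(p+m-3)/2\ge 1$ (and finite). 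The idea is then: pick $p$ and hence $a$. The two regimes $m<\tfrac32$ and $m\ge\tfrac32$ arise because when $p=3$ one has $\rho_\varepsilon^{(3-m-p)/2}=\rho_\varepsilon^{-m/2}$, whose $L^b$ norm is $\|\rho_\varepsilon\|_{L^{-mb/2}}$ with a \emph{negative} exponent unless $3-m-p$ has the right sign; one must instead choose $p$ so that $3-m-p\le 0$, i.e. $p\ge 3-m$.

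Concretely, for $m<\tfrac32$ I would take $p=3-m>1$ (so the assumptions of Lemma \ref{5} apply with this $p$, note $p>1\Leftrightarrow m<2$). Then $m+p-1=2$, so $\nabla\rho_\varepsilon^{(m+p-1)/2}=\nabla\rho_\varepsilon$ literally — wait, that would directly give $\nabla\rho_\varepsilon\in L^2(0,T;L^2)$, the stronger conclusion. So instead one wants $p$ slightly larger to land in the $L^{2m/(3-m)}$ space: the target exponent $\frac{2m}{3-m}$ corresponds, via $\frac1a=\frac1b+\frac12$ with $\|\rho_\varepsilon\|_{L^1}=M_0$ used for the remaining factor, to a specific $p$. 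Precisely, matching $a=\frac{2m}{3-m}$ and requiring the $\rho_\varepsilon$-factor to be an $L^1$-norm forces $b(p+m-3)/2=1$ together with $\frac1a=\frac1b+\frac12$; solving gives $b=\frac{2m}{m-1}$ and then $p+m-3 = 2/b = (m-1)/m$, i.e. $p = 3-m+\frac{m-1}{m}$, which exceeds $1$ exactly when $m<\tfrac32$ need not hold but is where $\frac{2m}{3-m}$ is the natural value. After fixing such a $p$, one writes
\begin{equation*}
\|\nabla\rho_\varepsilon\|_{L^{\frac{2m}{3-m}}(\mathbb R^n)}\le C\,M_0^{(p+m-3)/2}\,\big\|\nabla\rho_\varepsilon^{\frac{m+p-1}{2}}\big\|_{L^2(\mathbb R^n)} ,
\end{equation*}
squares both sides, integrates in $t\in(0,T)$, and invokes \eqref{nablarho} to conclude $\|\nabla\rho_\varepsilon\|_{L^2(0,T;L^{2m/(3-m)})}\le C$.

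For $m\ge\tfrac32$ the exponent $\frac{2m}{3-m}$ would be $\ge 2$ (or negative), so instead I aim directly for $L^2$ in space: choose $p$ so that $m+p-1\ge 2$, i.e. $p\ge 3-m$, which is automatic since $p>1\ge 3-m$ when $m\ge 2$, and for $\tfrac32\le m<2$ one takes any $p\ge 3-m$ with $p>1$; then $3-m-p\le 0$ and H\"older with exponents giving $a=2$ forces $b=\infty$, i.e. one needs $\rho_\varepsilon^{(3-m-p)/2}\in L^\infty$. This is where the $L^\infty$ bound is \emph{not} yet available in Lemma \ref{5}, so instead one should pick $p$ with $3-m-p$ small and negative and use a finite $L^b$ norm from \eqref{Lp}; taking $p$ large enough that $b=2(p+m-3)/\,\cdot$ is any admissible value, one gets $\|\nabla\rho_\varepsilon\|_{L^2(\mathbb R^n)}\le C\|\rho_\varepsilon\|_{L^{s}}^{\alpha}\|\nabla\rho_\varepsilon^{(m+p-1)/2}\|_{L^2}$ with $s$ finite, then integrate in time using \eqref{nablarho} and the uniform-in-time bound \eqref{Lp}.

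The main obstacle is the \textbf{bookkeeping of exponents}: one must verify that the chosen $p$ simultaneously (a) exceeds $1$ so Lemma \ref{5} applies, (b) makes $3-m-p\le 0$ so the power of $\rho_\varepsilon$ appearing is nonnegative and thus estimable by an $L^s$ norm with $s\ge 1$, (c) produces exactly the target spatial exponent $\frac{2m}{3-m}$ (resp.\ $2$), and (d) keeps all interpolation parameters in $[0,1]$. The two cases $m<\tfrac32$ versus $m\ge\tfrac32$ are dictated precisely by whether the natural exponent $\frac{2m}{3-m}$ lies in $(1,2]$ or not — for $m\ge\tfrac32$ it degenerates and one must upgrade to the $L^2$ estimate. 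Once the exponent algebra is pinned down, the proof is a one-line H\"older plus time integration against \eqref{nablarho} and \eqref{Lp}, with no further analytic difficulty.
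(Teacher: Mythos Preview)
Your overall strategy — factor $\nabla\rho_\varepsilon$ as a power of $\rho_\varepsilon$ times $\nabla\rho_\varepsilon^{(m+p-1)/2}$ and apply H\"older against the bound \eqref{nablarho} — is indeed the method behind the cited reference, so the skeleton is right. But the execution contains a sign inversion that breaks the argument.

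In the identity $\nabla\rho_\varepsilon = \frac{2}{m+p-1}\,\rho_\varepsilon^{(3-m-p)/2}\,\nabla\rho_\varepsilon^{(m+p-1)/2}$, the exponent on $\rho_\varepsilon$ is $(3-m-p)/2$. For this factor to be controllable via an $L^s$ norm with $s\ge 1$ you need the exponent to be \emph{nonnegative}, i.e.\ $p\le 3-m$. You repeatedly impose the opposite condition $3-m-p\le 0$, and then write
\[
\big\|\rho_\varepsilon^{(3-m-p)/2}\big\|_{L^b}=\|\rho_\varepsilon\|_{L^{b(p+m-3)/2}}^{(p+m-3)/2}.
\]
This identification is false when $(3-m-p)/2<0$: the left side is $\big(\int\rho_\varepsilon^{\,b(3-m-p)/2}\big)^{1/b}$, an integral of a \emph{negative} power of $\rho_\varepsilon$, which is not $\|\rho_\varepsilon\|_{L^s}$ for any $s>0$ and is not bounded by \eqref{Lp}. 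Your specific choice $p=3-m+\frac{m-1}{m}$ for $m<\frac32$ (and the value $b=\frac{2m}{m-1}$; the H\"older relation actually gives $b=\frac{2m}{3-2m}$) therefore does not deliver the claimed inequality. The same issue invalidates your treatment of the case $m\ge\frac32$, where you again take $p>3-m$ and try to estimate a negative power of $\rho_\varepsilon$.

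The correct bookkeeping is the reverse of yours. For $m<\tfrac32$ take $p=m$, so that $(3-m-p)/2=(3-2m)/2>0$; pairing $\rho_\varepsilon^{(3-2m)/2}$ with the uniform $L^m$ bound (i.e.\ $b(3-2m)/2=m$, hence $b=\tfrac{2m}{3-2m}$) yields exactly $a=\tfrac{2m}{3-m}$, and squaring plus integrating in $t$ against \eqref{nablarho} finishes. For $m\ge\tfrac32$ take $p=3-m\in(1,\tfrac32]$, so the exponent vanishes and \eqref{nablarho} with $(m+p-1)/2=1$ gives $\nabla\rho_\varepsilon\in L^2(0,T;L^2)$ directly, with no H\"older step at all. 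Your passing observation that ``$p=3-m$ gives $L^2$'' was correct; it is precisely the mechanism for the second case, not something to be discarded.
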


\begin{proof}
The proof of Lemma \ref{lem3} is the same to \cite[Lemma 2.3]{CW}. We omit it.
\end{proof}

Now, we give the estimate of the time derivative of $\rho_{\varepsilon}$.½Ó
\begin{lem}\label{timed}
Assume that the assumptions of Lemma \ref{5} hold, then for any fixed $T>0$, there exists a constant $C>0$ independent of $\varepsilon$ such that
\begin{eqnarray}
&&\|\partial_t\rho_{\varepsilon}\|_{L^2(0,T; (W_{\rm loc}^{1,s'}(\mathbb{R}^n))')}\leq C,\label{19}\\
&&\|\partial_{t}c_{\varepsilon}\|_{L^{\infty}(0,T;W^{-1,2}(\R^n))}\leq C,\label{partialc}
\end{eqnarray}
where $s'$ satisfies $\frac{1}{s'}+\frac{1}{s}=1$ and $s=\min\{\dfrac{2m}{m+1},\dfrac{nm(m+1)}{nm+(n-m)(m+1)}\}>1$.
\end{lem}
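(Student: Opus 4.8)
The plan is to estimate $\partial_t\rho_\varepsilon$ and $\partial_t c_\varepsilon$ directly from the two equations in the regularized system by duality, using the uniform bounds already collected in Lemma \ref{5} and Lemma \ref{lem3}. For the density equation, test (\ref{regequ1}) against $\varphi\in W^{1,s'}_{\rm loc}(\mathbb R^n)$ and integrate by parts to get
\begin{align*}
\Big|\dint_{\R^n}\partial_t\rho_\varepsilon\,\varphi\,dx\Big|
&=\Big|\dint_{\R^n}\big(\nabla(\rho_\varepsilon+\varepsilon)^m-(\rho_\varepsilon+\varepsilon)\nabla(c_\varepsilon\ast J_\varepsilon)\big)\cdot\nabla\varphi\,dx\Big|\\
&\leq\big(\|\nabla(\rho_\varepsilon+\varepsilon)^m\|_{L^s(K)}+\|(\rho_\varepsilon+\varepsilon)\nabla(c_\varepsilon\ast J_\varepsilon)\|_{L^s(K)}\big)\|\nabla\varphi\|_{L^{s'}(K)},
\end{align*}
where $K$ is the relevant compact set. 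So I must control the two terms $\nabla(\rho_\varepsilon+\varepsilon)^m=m(\rho_\varepsilon+\varepsilon)^{m-1}\nabla\rho_\varepsilon$ and $(\rho_\varepsilon+\varepsilon)\nabla(c_\varepsilon\ast J_\varepsilon)$ in $L^2(0,T;L^s)$ with $s$ as stated. The first is handled by writing $(\rho_\varepsilon+\varepsilon)^{m-1}\nabla\rho_\varepsilon=\frac{2}{m+1}(\rho_\varepsilon+\varepsilon)^{(m-1)/2}\nabla\rho_\varepsilon^{(m+1)/2}$ (up to the usual care with the $\varepsilon$-shift, which only helps) and applying Hölder: the factor $\rho_\varepsilon^{(m-1)/2}$ sits in a fixed $L^\infty(0,\infty;L^{\text{something}})$ space by (\ref{Lp}), while $\nabla\rho_\varepsilon^{(m+1)/2}\in L^2(0,T;L^2)$ by (\ref{nablarho}); the exponent bookkeeping produces exactly $s=\frac{2m}{m+1}$ as one of the two competing values. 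The second term is estimated by Hölder with $\rho_\varepsilon\in L^\infty(0,\infty;L^p)$ for suitable $p$ (from (\ref{Lp})) and $\nabla(c_\varepsilon\ast J_\varepsilon)$, whose $L^\infty(0,\infty;L^\gamma)$ bound for all $\gamma$ is exactly (\ref{nablac}); matching the integrability indices gives the second candidate $\frac{nm(m+1)}{nm+(n-m)(m+1)}$, and one takes $s$ to be the smaller of the two so that both terms are simultaneously controlled. Taking the supremum over $\|\varphi\|_{W^{1,s'}}\le1$ and then the $L^2$-norm in time yields (\ref{19}).

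For $\partial_t c_\varepsilon$, the argument is shorter: test (\ref{regequ2}) against $\psi\in W^{1,2}(\mathbb R^n)$, integrate by parts in the $\Delta c_\varepsilon$ term, and bound
\begin{align*}
\Big|\dint_{\R^n}\partial_t c_\varepsilon\,\psi\,dx\Big|
&\leq\|\nabla c_\varepsilon\|_{L^2(\R^n)}\|\nabla\psi\|_{L^2(\R^n)}+\big(\|c_\varepsilon\|_{L^2(\R^n)}+\|\rho_\varepsilon\ast J_\varepsilon\|_{L^2(\R^n)}\big)\|\psi\|_{L^2(\R^n)}.
\end{align*}
Here $\|\nabla c_\varepsilon\|_{L^\infty(0,\infty;L^2)}$ is (\ref{nablac}) with $\gamma=2$, $\|c_\varepsilon\|_{L^\infty(0,\infty;L^2)}$ follows from (\ref{cLp}) together with the $L^\infty_t L^{q}$ bound on $\rho_\varepsilon$ from (\ref{Lp}) (choosing $q$ so that the heat-semigroup estimate applies), and $\|\rho_\varepsilon\ast J_\varepsilon\|_{L^2}\le\|\rho_\varepsilon\|_{L^2}$ is again (\ref{Lp}) with $p=2$. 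Taking the supremum over the unit ball of $W^{1,2}$ gives the $W^{-1,2}$ bound uniformly in $t$, hence (\ref{partialc}).

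The main obstacle is the first part: one has to choose the localization exponent $s$ so that \emph{both} the diffusion flux $(\rho_\varepsilon+\varepsilon)^{m-1}\nabla\rho_\varepsilon$ and the drift flux $(\rho_\varepsilon+\varepsilon)\nabla(c_\varepsilon\ast J_\varepsilon)$ land in $L^2(0,T;L^s_{\rm loc})$ using only the already-established bounds, which forces the precise interpolation of $\rho_\varepsilon$ between $L^{2n/(n+2)}$, $L^p$, and $L^{n(m+p-1)/(n-2)}$ and explains the somewhat unusual second value $\frac{nm(m+1)}{nm+(n-m)(m+1)}$ in the definition of $s$; one should double-check that this quantity is indeed $>1$ in the range $\frac{2n}{n+2}<m<2-\frac2n$, which is where the hypothesis on $m$ is used. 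The $\varepsilon$-terms coming from the $+\varepsilon$ shift and from the factor $p\varepsilon$-type remainders are harmless — they are uniformly bounded by the same quantities (or vanish as $\varepsilon\to0$) — so no new estimate is needed for them, and localization to compact sets in $W^{1,s'}_{\rm loc}$ is only needed because these flux bounds are $L^s$ rather than $L^{s'}$-dual-friendly on all of $\mathbb R^n$.
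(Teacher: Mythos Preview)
Your approach is correct and is essentially what the paper has in mind: it simply refers to \cite[Lemma 2.4]{CW} for (\ref{19}) and says (\ref{partialc}) follows from (\ref{regequ2}) together with (\ref{Lp}), (\ref{cLp}), (\ref{nablacLp}), which is precisely the duality-and-H\"older argument you spell out. One small remark: in the present parabolic-parabolic setting, (\ref{nablac}) gives $\nabla c_\varepsilon\in L^\infty(0,\infty;L^\gamma)$ for \emph{every} $\gamma$, so the drift flux $(\rho_\varepsilon+\varepsilon)\nabla(c_\varepsilon\ast J_\varepsilon)$ lands in $L^\infty_tL^s_x$ for any $s>1$ and does not genuinely constrain $s$; the second candidate $\frac{nm(m+1)}{nm+(n-m)(m+1)}$ is inherited verbatim from the parabolic-elliptic argument in \cite{CW}, where $\nabla c$ is only controlled through an elliptic estimate, and is harmless here since taking the minimum can only relax the requirement.
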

\begin{proof}

The proof of (\ref{19}) is the same as that in \cite[Lemma 2.4]{CW}. Using the second equation (\ref{regequ2}) with (\ref{Lp}), (\ref{cLp}) and (\ref{nablacLp}), the estimate (\ref{partialc}) can be proved.

\end{proof}

\subsection{Compactness argument and the proof of Theorem \ref{thm}}

Utilizing the uniform estimates in above two subsections and the Lions-Aubin Lemma \cite{CYL,L}, We have the following convergence.
\begin{lem}\label{lem1}
Assume that $(\rho_0,~c_0)$ satisfies the assumptions of Theorem \ref{thm}. Let~$(\rho_{\varepsilon},c_{\varepsilon})$~be the solution to (\ref{regequ1})-(\ref{reginitial}). Then there is a subsequence of $\{\rho_{\varepsilon}\}$,~$\{c_{\varepsilon}\}$(without relabeling for convenience) and functions $\rho$ and $c$ such that as $\varepsilon\to 0$
\begin{eqnarray}
&&\rho_{\varepsilon}\stackrel{\text{*}}{\rightharpoonup}\rho   \quad\mbox{in}~L^{\infty}(0,T;L^{1}\cap L^{m}(\R^n)),\label{conver1}\\
&&\rho_{\varepsilon}\rightharpoonup\rho    \quad\mbox{in}~L^{m+1}(0,T;L^{m+1}(\R^n)),\\
&&\nabla\rho_{\varepsilon}\rightharpoonup\nabla\rho    \quad\mbox{in}~L^{2}(0,T;L^{r}(\R^n)),\\
&&\partial_{t}\rho_{\varepsilon}\rightharpoonup\partial_{t}\rho  \quad\mbox{in}~L^{2}(0,T;(W_{loc}^{1,s'}(\R^n))')),\\
&&\partial_{t}c_{\varepsilon}\stackrel{\text{*}}{\rightharpoonup}\partial_{t}c    \quad\mbox{in}~L^{\infty}(0,T;W^{-1,2}(\R^n)),\\
&&\nabla c_{\varepsilon}\stackrel{\text{*}}{\rightharpoonup}\nabla c           \quad\mbox{in}~{L^\infty(0,T; L^l(\mathbb{R}^n))},\label{conver5}
\end{eqnarray}
where~$r=\min\{2,\frac{2m}{3-m}\}$, $\frac{1}{s'}+\frac{1}{s}=1$ and $s=\min\{\dfrac{2m}{m+1},\dfrac{nm(m+1)}{nm+(n-m)(m+1)}\}>1$,~$1\leq l <\infty$.
Moreover, the following strong convergence holds
\begin{eqnarray}\label{strong}
\rho_{\varepsilon}\rightarrow\rho \quad \mbox{in}~ L^{2}(0,T;L^{2}(\R^n)).
\end{eqnarray}
\end{lem}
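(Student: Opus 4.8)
The plan is to extract convergent subsequences by combining the uniform-in-$\varepsilon$ bounds from Lemmas \ref{5}, \ref{lem3} and \ref{timed} with weak-$*$ compactness in the relevant dual spaces, and then to upgrade the weak convergence of $\rho_\varepsilon$ to strong $L^2(0,T;L^2)$ convergence via an Aubin--Lions type argument. First I would record which bound gives which weak limit: the uniform bound \eqref{Lp} (with $p=1$ from mass conservation and $p=m$) gives, by Banach--Alaoglu in $L^\infty(0,T;(L^1\cap L^m)(\R^n))$ viewed as a dual space, the weak-$*$ limit \eqref{conver1}; the bound \eqref{p+1} gives weak $L^{m+1}(0,T;L^{m+1})$ convergence; Lemma \ref{lem3} gives weak $L^2(0,T;L^r(\R^n))$ convergence of $\nabla\rho_\varepsilon$ with $r=\min\{2,\tfrac{2m}{3-m}\}$, and this limit is identified with $\nabla\rho$ in the distributional sense because the gradient is continuous from $L^2_{t,x}$ into distributions. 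Similarly \eqref{19} yields weak convergence of $\partial_t\rho_\varepsilon$ in $L^2(0,T;(W^{1,s'}_{\rm loc})')$, whose limit must be $\partial_t\rho$ again by passing to the limit in the defining distributional pairing. For the chemical component, \eqref{partialc} gives the weak-$*$ limit of $\partial_t c_\varepsilon$ in $L^\infty(0,T;W^{-1,2})$, and \eqref{nablac} gives the weak-$*$ limit of $\nabla c_\varepsilon$ in $L^\infty(0,T;L^l)$ for every $1\le l<\infty$; a diagonal argument over a countable exhausting set of exponents $l$ produces a single subsequence working for all such $l$ at once.

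The main point, and the place where the argument is not merely bookkeeping, is the strong convergence \eqref{strong}. Here I would invoke the Aubin--Lions--Simon lemma in the form adapted to the triple
\[
W^{1,r}(B_R)\hookrightarrow\hookrightarrow L^2(B_R)\hookrightarrow (W^{1,s'}(B_R))',
\]
valid on each ball $B_R=B_R(0)$ since the first embedding is compact and the second continuous (note $r>1$ so $W^{1,r}(B_R)$ does embed compactly into $L^2(B_R)$ for $R<\infty$). The bounds \eqref{Lp}, \eqref{nablarho}--via Lemma \ref{lem3}--and \eqref{19} give, on each fixed $B_R$, that $\{\rho_\varepsilon\}$ is bounded in $L^2(0,T;W^{1,r}(B_R))$ and $\{\partial_t\rho_\varepsilon\}$ is bounded in $L^2(0,T;(W^{1,s'}(B_R))')$, so Aubin--Lions yields a subsequence converging strongly in $L^2(0,T;L^2(B_R))$; a diagonal argument over $R\to\infty$ then gives strong convergence in $L^2(0,T;L^2_{\rm loc}(\R^n))$, and in particular a.e.\ convergence after a further subsequence.

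To promote local strong convergence to global strong convergence in $L^2(0,T;L^2(\R^n))$, I would use tightness: for any $R$, by interpolating the uniform $L^\infty(0,T;L^1)$ bound against the uniform $L^\infty(0,T;L^{p})$ bound with $p$ large (any $p>2$ suffices), one controls $\|\rho_\varepsilon\|_{L^2(0,T;L^2(\R^n\setminus B_R))}$ by a quantity that, together with a uniform decay estimate on the mass outside large balls, tends to $0$ as $R\to\infty$ uniformly in $\varepsilon$. The cleanest route to the needed tail smallness is to test \eqref{regequ1} against a suitable cutoff and use the uniform bounds on $\nabla\rho_\varepsilon^{(m+p-1)/2}$ and $\nabla c_\varepsilon$ to bound $\frac{d}{dt}\int_{|x|>R}\rho_\varepsilon\,\varphi_R\,dx$; alternatively, since we only need $L^2$-tightness and we already have local strong (hence a.e.) convergence plus the uniform high-integrability bound \eqref{Lp}, one may instead argue by Vitali's convergence theorem once $L^2$-uniform-integrability and tightness are verified. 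I expect this tail/tightness step to be the only genuinely delicate part; the rest is standard compactness bookkeeping. Finally, combining the weak and strong convergences, the nonlinear terms $\rho_\varepsilon^m\to\rho^m$ (in $L^1_{\rm loc}$, using a.e.\ convergence and the $L^{m+1}$ bound via Vitali) and $(\rho_\varepsilon+\varepsilon)\nabla(c_\varepsilon*J_\varepsilon)\to\rho\nabla c$ (weak $\times$ strong, together with $\|J_\varepsilon*g-g\|\to0$) pass to the limit, which is exactly what is needed in the subsequent proof of Theorem \ref{thm}.
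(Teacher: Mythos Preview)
Your proposal is correct and follows essentially the same strategy as the paper: deduce the weak/weak-$*$ limits \eqref{conver1}--\eqref{conver5} directly from the uniform bounds of Lemmas~\ref{5}, \ref{lem3} and~\ref{timed} via Banach--Alaoglu, and obtain the strong convergence \eqref{strong} from the Lions--Aubin lemma using \eqref{28}--\eqref{19}. The paper's own proof is two sentences long and simply cites these ingredients; you are more explicit about the whole-space issue (localizing to balls $B_R$, then recovering the global $L^2$ limit by a tightness/Vitali argument), which the paper absorbs into its citation of the Aubin--Lions--Dubinski\u{\i} lemma. Note that your final paragraph on passing to the limit in the nonlinear terms $(\rho_\varepsilon+\varepsilon)^m$ and $(\rho_\varepsilon+\varepsilon)\nabla(c_\varepsilon*J_\varepsilon)$ is not part of this lemma but belongs to the proof of Theorem~\ref{thm}.
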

\begin{proof}
From Lemma \ref{5}, Lemma \ref{lem3} and Lemma \ref{timed}, we deduce that the convergence results in (\ref{conver1})-(\ref{conver5}) hold true.

Using the Lions-Aubin lemma and the estimates (\ref{28})-(\ref{19}), we obtain (\ref{strong}).
\end{proof}

\begin{proof}[\bf The proof of Theorem \ref{thm}]

With the help of the convergence results in Lemma \ref{lem1}, the existence of weak solutions can be obtained directly by taking the limit $\varepsilon\to 0$ in the weak formulation of the regularized problem. Namely, for any $\varphi\in C_{c}^{\infty}(\R^n)$, $(\rho_{\varepsilon},c_{\varepsilon})$~satisfies the following equations
\begin{align*}
\dint^t_0\dint_{\R^n}\partial_{t}\rho_{\varepsilon}\varphi dxdt
&=-\dint^t_0\dint_{\R^n}\nabla\varphi\nabla(\rho_{\varepsilon}+\varepsilon)^m dxdt+\dint^t_0\dint_{\R^n}\nabla\varphi(\rho_{\varepsilon}+\varepsilon)\nabla c_{\varepsilon}\ast J_{\varepsilon} dxdt,\\
\dint^t_0\dint_{\R^n}\partial_{t}c_{\varepsilon}\varphi dxdt&=-\int_{0}^{t}\int_{\mathbb R^n} \nabla c_{\varepsilon} \nabla \varphi dx ds-\int_{0}^{t}\int_{\mathbb R^n} c_{\varepsilon}  \varphi dx ds\nonumber+\int_{0}^{t}\int_{\mathbb R^n} \rho_{\varepsilon} \varphi dx ds.
\end{align*}
For $\varepsilon\to 0$, we can prove that the limit function $(\rho,c)$~satisfies~(\ref{r1})~and~(\ref{r2}), i.e., $(\rho,c)$ is a weak solution of the model (\ref{equ1})-(\ref{initialdata}) in the distribution sense.
\end{proof}

 \section{The uniform in time $L^{\infty}$-bound of weak solutions}
 In this section we prove that weak solutions to (\ref{equ1})-(\ref{initialdata}) have a
 uniform in time $L^\infty$ bound by utilizing a modified Moser iteration, which has been successfully applied in the parabolic-elliptic Keller-Segel system in \cite{BLZ,LW}. As that has been already explained in the introduction, the iteration strongly depends on the explicit expression of the constant $B_{\infty,q,n}$ in (\ref{nablacLinfty}). The main result in this section is the following iteration lemma.
\begin{lem}\label{lem2}(The $L^{p_{k}}$ estimate)
Assume initial density $\rho_0\in L^{\infty} (\mathbb R^{n})$. Let the assumptions of Theorem \ref{thm} hold. Set $p_k=2^k+4n+4$ for $k=1,2,\cdot\cdot\cdot$. Then the $L^{p_k}$ norm of solutions satisfies the following the inequality
\begin{align}\label{ODEinequa}
\frac{d}{dt}\|\rho\|^{p_k}_{L^{p_k}(\R^n)} \leq& -\|\rho\|^{p_k}_{L^{p_k}(\R^n)}+ C p_k^{2n}\Big(\Big(\|\rho\|_{L^{p_{k-1}}(\R^n)}^{{p_{k-1}}}\Big)^{\eta_1}
+\Big(\|\rho\|_{L^{p_{k-1}}(\R^n)}^{{p_{k-1}}}\Big)^{\eta_2}\nonumber\\
&+\Big(\|\rho\|_{L^{\infty}(0,\infty; L^{p_{k-1}}(\R^n))}^{{p_{k-1}}}\Big)^{\eta_3}\Big),
\end{align}
where $0<\eta_1,\eta_2,\eta_3\leq 2$, and $C$ is a constant independent of $p_k$.
\end{lem}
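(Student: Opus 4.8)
The starting point is the differential identity (\ref{na}) for the $L^p$ norm, applied with $p=p_k$, which for a genuine (non-regularized) weak solution reads
$$
\frac{d}{dt}\int_{\R^n}\rho^{p_k}\,dx=-mp_k(p_k-1)\int_{\R^n}\rho^{m+p_k-3}|\nabla\rho|^2\,dx+p_k(p_k-1)\int_{\R^n}\rho^{p_k-1}\nabla c\cdot\nabla\rho\,dx.
$$
The first term is the good dissipation $-\frac{4mp_k(p_k-1)}{(m+p_k-1)^2}\|\nabla\rho^{(m+p_k-1)/2}\|_{L^2}^2$; the second is estimated by Cauchy--Schwarz and Young's inequality, splitting off a factor $\nu\|\nabla\rho^{(m+p_k-1)/2}\|_{L^2}^2$ with $\nu$ a fixed fraction of the dissipation constant, leaving a term of the form $C p_k^2\|\nabla c\|_{L^\infty}^2\|\rho\|_{L^{p_k-1}}^{p_k-1}$. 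This mirrors the computation (\ref{lpest1})--(\ref{estimate2}) in Lemma \ref{5}, only now there is no $\varepsilon$-mollification to carry and we must track the polynomial dependence on $p_k$ carefully rather than absorbing it into $C(m,n,p)$.

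First I would control $\|\nabla c\|_{L^\infty}^2$. Applying the key estimate (\ref{nablacLinfty}) with source $f=\rho$ and with a fixed exponent $q\in(n,n+1)$ independent of $k$, together with the explicit form of $B_{\infty,q,n}$, gives $\|\nabla c\|_{L^\infty}^2\le 2\|\nabla c_0\|_{L^\infty}^2+C\|\rho\|_{L^\infty(0,\infty;L^q)}^2$, exactly as in (\ref{ccontrol}). Then interpolate $L^q$ between $L^{\frac{2n}{n+2}}$ (uniformly bounded by Lemma \ref{4}) and $L^{p_{k-1}}$: since $q<n+1\le p_{k-1}$ for all $k$, one gets $\|\rho\|_{L^\infty(0,\infty;L^q)}^2\le C\,\|\rho\|_{L^\infty(0,\infty;L^{p_{k-1}})}^{2\theta_3}$ with $\theta_3$ from (\ref{theta3}) (with $p$ replaced by $p_{k-1}$). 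Since $q$ is fixed and $p_{k-1}\to\infty$, $\theta_3$ stays bounded away from $1$, so $2\theta_3 p_{k-1}/(p_{k-1}-1)<2$ for large $k$; writing $2\theta_3=\eta_3 p_{k-1}$ for the appropriate $\eta_3\le 2$ absorbs this into the last term on the right of (\ref{ODEinequa}).

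Next I would convert the remaining bad term $Cp_k^2\|\nabla c_0\|_{L^\infty}^2\|\rho\|_{L^{p_k-1}}^{p_k-1}$ and, after the interpolation above, a term $Cp_k^2\|\rho\|_{L^{p_k-1}}^{p_k-1}\|\rho\|_{L^\infty(0,\infty;L^{p_{k-1}})}^{2\theta_3}$. For the $L^{p_k-1}$ factor I would interpolate $L^{p_k-1}$ between $L^{p_{k-1}}$ and $L^{\frac{n(m+p_k-1)}{n-2}}=\|\rho^{(m+p_k-1)/2}\|_{L^{2n/(n-2)}}^{2/(m+p_k-1)}$, then use Sobolev (\ref{Sobolev}) to bring in $\|\nabla\rho^{(m+p_k-1)/2}\|_{L^2}$ to a power strictly less than $2$ (the exponent $2\theta(p_k-1)/(m+p_k-1)<2$ because $p_k-1<\frac{n(m+p_k-1)}{n-2}$), and apply Young's inequality once more to absorb it into the dissipation, as in (\ref{29})--(\ref{lp1}). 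This produces on the right a bounded power of $\|\rho\|_{L^{p_{k-1}}}^{p_{k-1}}$ (the exponents $\eta_1,\eta_2$) times the polynomial prefactor. The one structural point to check is that, after collecting the powers of $p_k$ coming from $p_k(p_k-1)$, from the Young constants $\nu^{-1}$, and from the $L^p$-interpolation/Sobolev constants $S_n^{-\theta\cdots}$, the total polynomial growth is at most $p_k^{2n}$; this is the purpose of having defined $p_k=2^k+4n+4$ so that all the $\theta$-exponents are uniformly comparable, and it is the bookkeeping that the constant in (\ref{nablacLinfty}) had to be explicit for.

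Finally, to get the leading term $-\|\rho\|_{L^{p_k}}^{p_k}$ with coefficient exactly $1$: after all absorptions a fixed positive multiple $C_1\|\nabla\rho^{(m+p_k-1)/2}\|_{L^2}^2$ of the dissipation survives. Interpolating $L^{p_k}$ between $L^{\frac{2n}{n+2}}$ and $L^{\frac{n(m+p_k-1)}{n-2}}$ and using Sobolev gives $\|\rho\|_{L^{p_k}}^{p_k}\le C\|\nabla\rho^{(m+p_k-1)/2}\|_{L^2}^{2\theta_4 p_k/(m+p_k-1)}$ with exponent $<2$, so by Young's inequality $\|\rho\|_{L^{p_k}}^{p_k}\le \nu_2\|\nabla\rho^{(m+p_k-1)/2}\|_{L^2}^2+C(m,n,p_k)$; choosing $\nu_2=C_1$ and moving the term over produces $-\|\rho\|_{L^{p_k}}^{p_k}$ on the right, at the cost of adding a further $C(m,n,p_k)$-size constant, which (after checking its $p_k$-growth is again $\le p_k^{2n}$) can be folded into the bracket on the right of (\ref{ODEinequa}) — e.g. as part of the $\eta_3$ term since $\|\rho\|_{L^\infty(0,\infty;L^{p_{k-1}})}\ge$ a constant. \textbf{The main obstacle} is the uniform control of all the interpolation exponents and of the aggregate power of $p_k$: one must verify that with $p_k=2^k+4n+4$ the quantities $\theta_1,\theta_2,\theta_3,\theta_4$ (and the derived $q_1,\ell_1$) are bounded away from their degenerate values uniformly in $k$, so that every Young-inequality split is legitimate and the accumulated prefactor does not exceed $Cp_k^{2n}$ — everything else is a repetition of the estimates already carried out in the proof of Lemma \ref{5}.
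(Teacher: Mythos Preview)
Your plan follows the paper's proof essentially step for step: the energy identity \eqref{na}, Cauchy--Schwarz plus Young to split off the drift, the semigroup estimate \eqref{nablacLinfty} for $\|\nabla c\|_{L^\infty}$ interpolated down to $L^{p_{k-1}}$, and Gagliardo--Nirenberg--Sobolev interpolation to feed everything back into the surviving dissipation, with the bookkeeping on $q_2\le n$ giving the $p_k^{2n}$ prefactor.

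Two small corrections are worth noting. First, after Cauchy--Schwarz the residual factor is $\|\rho\|_{L^{p_k+1-m}}^{p_k+1-m}$, not $\|\rho\|_{L^{p_k-1}}^{p_k-1}$ (split $\rho^{p_k-1}|\nabla\rho|=\rho^{(p_k+1-m)/2}\cdot\rho^{(m+p_k-3)/2}|\nabla\rho|$); this is harmless for the interpolation but should be written correctly. Second, and more to the point, in the final step the paper interpolates $\|\rho\|_{L^{p_k}}^{p_k}$ between $L^{p_{k-1}}$ and the Sobolev endpoint, \emph{not} between $L^{2n/(n+2)}$ and the Sobolev endpoint. This choice is exactly what manufactures the $\eta_1$ term in the stated inequality; your choice produces only an additive constant $C(m,n,p_k)$, and your proposed workaround (``fold it into the $\eta_3$ term since $\|\rho\|_{L^\infty(0,\infty;L^{p_{k-1}})}\ge$ a constant'') is not justified, because no such lower bound is available a priori. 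Interpolating against $L^{p_{k-1}}$ instead closes this gap cleanly and delivers the three-term right-hand side exactly as stated.
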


\begin{proof}
Multiplying $p_k\rho^{p_k-1}$ ($k=1,2,\cdot\cdot\cdot$) to the first equation of (\ref{equ1}) and integrating it in $\mathbb{R}^n$, we have
\begin{align}
&\frac{d}{dt}\dint_{\mathbb{R}^n} \rho^{p_k} \,dx\nonumber\\
=&-mp_k(p_k-1)\dint_{\mathbb{R}^n}\rho^{m+p_k-3}|\nabla\rho|^2 \,dx +p_k(p_k-1)\dint_{\mathbb{R}^n}\rho^{p_k-1}\nabla c\nabla\rho\,dx\nonumber\\
\leq&-mp_k(p_k-1)\dint_{\mathbb{R}^n}\rho^{m+p_k-3}|\nabla\rho|^2 \,dx +p_k(p_k-1)\|\nabla c\|_{L^\infty(\R^n)}\dint_{\mathbb{R}^n}\rho^{p_k-1}|\nabla\rho|\,dx.\label{formula}
\end{align}
Furthermore because of
\begin{eqnarray}\label{intepolation}
\dint_{\mathbb{R}^n}\rho^{p_k-1}|\nabla\rho|\,dx\leq \Big(\dint_{\mathbb{R}^n}\rho^{p_k-m+1}\,dx\Big)^{1/2}\Big(\dint_{\mathbb{R}^n}\rho^{p_k+m-3}|\nabla\rho|^2\,dx\Big)^{1/2},
\end{eqnarray}
we deduce from (\ref{formula}) and (\ref{intepolation}) that for any $\sigma_1>0$,
\begin{align}\label{dire}
\frac{d}{dt}\dint_{\mathbb{R}^n} \rho^{p_k} \,dx
&\leq-mp_k(p_k-1)\dint_{\mathbb{R}^n}\rho^{m+p_k-3}|\nabla\rho|^2 \,dx +\sigma_1\dint_{\mathbb{R}^n}\rho^{p_k+m-3}|\nabla\rho|^2\,dx\nonumber\\
&\quad +\frac{p_k^2(p_k-1)^2}{4\sigma_1} \|\nabla c\|^2_{L^\infty(\R^n)}\dint_{\mathbb{R}^n}\rho^{p_k-m+1}\,dx.
\end{align}
Taking $\sigma_1=\frac{mp_k(p_k-1)}{4}$, then (\ref{dire}) can be written as the following form
\begin{align*}
&\frac{d}{dt}\dint_{\mathbb{R}^n} \rho^{p_k} \,dx\\
=&-\dfrac{3p_km(p_k-1)}{(m+p_k-1)^2}\dint_{\mathbb{R}^n}|\nabla\rho^{\frac{m+p_k-1}{2}}|^2 \,dx +\frac{p_k(p_k-1)}{m}\|\nabla c\|^2_{L^\infty(\R^n)}\dint_{\mathbb{R}^n}\rho^{p_k-m+1}\,dx\\
\leq&-3C_1\dint_{\mathbb{R}^n}|\nabla\rho^{\frac{m+p_k-1}{2}}|^2 \,dx +\frac{p_k(p_k-1)}{m}\|\nabla c\|^2_{L^\infty(\R^n)}\dint_{\mathbb{R}^n}\rho^{p_k-m+1}\,dx,
\end{align*}
where $0<C_1\leq \dfrac{p_km(p_k-1)}{(m+p_k-1)^2}$ is a fixed constant.

Notice the following inequality
$$
\|\rho\|_{L^{p_k-m+1}(\R^n)}\leq (S_n)^{-\frac{\theta_1}{m+p_k-1}}\|\rho\|^{1-\theta_1}_{L^{p_{k-1}}(\R^n)}
\|\nabla\rho^{\frac{m+p_k-1}{2}}\|^{\frac{2\theta_1}{m+p_k-1}}_{L^2(\R^n)}
$$
with $\theta_1=\left(\frac{1}{p_{k-1}}-\frac{1}{p_k+1-m}\right)\Big/\left(\frac{1}{p_{k-1}}-\frac{n-2}{n(m+p_k-1)}\right)$.
And taking $p=p_{k-1}$ in (\ref{ccontrol}), we have $\theta_3\sim O(1)$ as $k\to +\infty$. Therefore, together with the expression of $B_{\infty,q,n}$, we know that there is a constant $C$ independent of $p_k$ such that
\begin{align*}
\frac{d}{dt}\dint_{\mathbb{R}^n} \rho^{p_k} \,dx
&\leq -3C_1\dint_{\mathbb{R}^n}|\nabla\rho^{\frac{m+p_k-1}{2}}|^2 \,dx \\
&\quad +\frac{2p^2_k}{m}S_n^{-\frac{\theta_1(p_k-m+1)}{m+p_k-1}}\|\nabla c_0\|^2_{L^{\infty}(\R^n)}\|\rho\|^{(1-\theta_1)(p_k-m+1)}_{L^{p_{k-1}}(\R^n)}
\|\nabla\rho^{\frac{m+p_k-1}{2}}\|^{\frac{2\theta_1(p_k-m+1)}{m+p_k-1}}_{L^2(\R^n)}\\
&\quad +\frac{2Cp^2_k}{m}S_n^{-\frac{\theta_1(p_k-m+1)}{m+p_k-1}}\| \rho\|^{2\theta_3+(1-\theta_1)(p_k-m+1)}_{L^{\infty}(0,\infty;L^{p_{k-1}}(\R^n))}
\|\nabla\rho^{\frac{m+p_k-1}{2}}\|^{\frac{2\theta_1(p_k-m+1)}{m+p_k-1}}_{L^2(\R^n)}.
\end{align*}
Young's inequality implies that
\begin{align}
\frac{d}{dt}\dint_{\mathbb{R}^n} \rho^{p_k} \,dx
&\leq -C_1\dint_{\mathbb{R}^n}|\nabla\rho^{\frac{m+p_k-1}{2}}|^2 \,dx\nonumber\\
&\quad +C(C_1)\left(\frac{2p^2_k}{m}S_n^{-\frac{\theta_1(p_k-m+1)}{m+p_k-1}}\|\nabla c_0\|^2_{L^{\infty}(\R^n)}\|\rho\|^{(1-\theta_1)(p_k-m+1)}_{L^{p_{k-1}}(\R^n)}\right)^{q_2}\nonumber\\
&\quad +C(C_1)\left(\frac{2Cp^2_k}{m}S_n^{-\frac{\theta_1(p_k-m+1)}{m+p_k-1}}\| \rho\|^{2\theta_3+(1-\theta_1)(p_k-m+1)}_{L^{\infty}(0,\infty;L^{p_{k-1}}(\R^n))}
\right)^{q_2}.\label{Lp2}
\end{align}
Here $C(C_1)=(C_1 q_1)^{-q_2/q_1}q_2^{-1}$, where $q_1,q_2>1$ satisfy $\frac{q_1\theta_1(p_k-m+1)}{m+p_k-1}=1$ and $\frac{1}{q_1}+\frac{1}{q_2}=1$, which means
\begin{eqnarray}\label{q2}
q_2=\frac{m+p_k-1}{(m+p_k-1)-\theta_1(p_k-m+1)}\leq n.
\end{eqnarray}
It is not difficult to check that there exists a constant $\bar C>1$ independent of $p_k$ such that
\begin{eqnarray*}\label{constant1}
C(C_1)\left(\frac{2}{m}S_n^{-\frac{\theta_1(p_k-m+1)}{m+p_k-1}}\|\nabla c_0\|^2_{L^{\infty}(\R^n)}\right)^{q_2}\leq \bar C,\quad C(C_1)\left(\frac{2C}{m}S_n^{-\frac{\theta_1(p_k-m+1)}{m+p_k-1}}\right)^{q_2}\leq \bar C,
\end{eqnarray*}
from which (\ref{Lp2}) can be reduced to
\begin{align}\label{inequa1}
\frac{d}{dt}\dint_{\mathbb{R}^n} \rho^{p_k} \,dx
&\leq -C_1\dint_{\mathbb{R}^n}|\nabla\rho^{\frac{m+p_k-1}{2}}|^2 \,dx\nonumber\\
 &\quad+\bar Cp^{2q_2}_k\left(\Big(\|\rho\|^{p_{k-1}}_{L^{p_{k-1}}(\R^n)}\Big)^{\eta_2}
+\Big(\|\rho\|^{p_{k-1}}_{L^{\infty}(0,\infty; L^{p_{k-1}}(\R^n))}\Big)^{\eta_3}\right),
\end{align}
where
\begin{eqnarray*}\label{eta1}
\eta_2:=\frac{(p_k-m+1)q_2(1-\theta_1)}{p_{k-1}}\leq 2,\quad \eta_3:=\frac{q_2(2\theta_3+(1-\theta_1)(p_k-m+1))}{p_{k-1}}\leq 2.
\end{eqnarray*}
On the other hand, the interpolation inequality and Sobolev inequality imply that
\begin{eqnarray*}\label{GNS}
\|\rho\|^{p_k}_{L^{p_k}}\leq
S_n^{-\frac{\theta p_k}{m+p_k-1}}
\Big\|\nabla\rho^{\frac{m+p_k-1}{2}}\Big\|^{\frac{2\theta p_k}{m+p_k-1}}_{L^{2}(\R^n)}
\big\|\rho\big\|^{(1-\theta)p_k}_{L^{p_{k-1}}(\R^n)},
\end{eqnarray*}
where
$$
\theta=\frac{\frac{1}{p_{k-1}}-\frac{1}{p_k}}{\frac{1}{p_{k-1}}-\frac{n-2}{n(m+p_k-1)}}\sim O(1),
\quad 1-\theta=\frac{\frac{1}{p_{k}}-\frac{n-2}{n(m+p_k-1)}}{\frac{1}{p_{k-1}}-\frac{n-2}{n(m+p_k-1)}}\sim O(1).
$$
Again using Young's inequality, we have
\begin{eqnarray}\label{young2}
\|\rho\|^{p_k}_{L^{p_k}(\R^n)}\leq (C_1 \ell_1)^{-\ell_2/\ell_1}\ell_2^{-1}  S_n^{-\frac{\ell_2\theta p_k}{m+p_k-1}}
\|\rho\|_{L^{p_{k-1}}(\R^n)}^{p_k((1-\theta))\ell_2}
+C_1\|\nabla \rho^{\frac{m+p_k-1}{2}}\|^2_{L^2(\R^n)},
\end{eqnarray}
where $\ell_1= \frac{m+p_k-1}{\theta p_k}$,
and $\ell_2=\frac{m+p_k-1}{(m+p_k-1)-\theta p_k}$. Notice that there is a constant $\tilde{C}$ independent of $p_k$ such that
$$
(C_1 \ell_1)^{-\ell_2/\ell_1}\ell_2^{-1} S_n^{-\frac{\ell_2\theta p_k}{m+p_k-1}}\leq \tilde C.
$$
Hence from (\ref{inequa1}) and (\ref{young2}), we deduce that
 \begin{align*}
\frac{d}{dt}\|\rho\|^{p_k}_{L^{p_k}(\R^n)}
&\leq -\|\rho\|^{p_k}_{L^{p_k}(\R^n)}
+\tilde C\|\rho\|_{L^{p_{k-1}}(\R^n)}^{p_{k-1}\eta_1}\nonumber\\
&\quad+\bar Cp_k^{2q_2}\left(\Big(\|\rho\|^{p_{k-1}}_{L^{p_{k-1}}(\R^n)}\Big)^{\eta_2}
+\Big(\|\rho\|^{p_{k-1}}_{L^{\infty}(0,\infty;L^{p_{k-1}}(\R^n))}\Big)^{\eta_3}\right).
\end{align*}
where $\eta_1:=\frac{p_k(1-\theta)\ell_2}{p_{k-1}}\leq 2$. In the end we arrive at the following inequality
\begin{align*}
\frac{d}{dt}\|\rho\|^{p_k}_{L^{p_k}(\R^n)} &\leq -\|\rho\|^{p_k}_{L^{p_k}(\R^n)}+C p_k^{2q_2}\Big(\Big(\|\rho\|_{L^{p_{k-1}}(\R^n)}^{{p_{k-1}}}\Big)^{\eta_1}
+\Big(\|\rho\|^{p_{k-1}}_{L^{p_{k-1}}(\R^n)}\Big)^{\eta_2}\nonumber\\
&\quad+\Big(\|\rho\|^{p_{k-1}}_{L^{\infty}(0,\infty;L^{p_{k-1}}(\R^n))}\Big)^{\eta_3}\Big),
\end{align*}
where $0<\eta_1,\eta_2,\eta_3\leq 2$ and $C$ is independent of $p_k$. This together with (\ref{q2}) completes the proof of Lemma \ref{lem2}.
\end{proof}

\bigskip
\noindent
{\bf Proof of Theorem \ref{unithm}}. Let $y_k(t):=\|\rho\|^{p_k}_{L^{p_k}}$, solving the differential inequality (\ref{ODEinequa}), we obtain
\begin{align}\label{inequa4}
\big(e^t y_k(t)\big)'&\leq C p_k^{2q_2}(y_{k-1}^{\eta_1}+y_{k-1}^{\eta_2}+\sup_{t\geq 0}y_{k-1}^{\eta_3})e^t\nonumber\\
&\leq 3 C (4n)^{2n}4^{nk}\max\{1,\sup_{t\geq 0}y_{k-1}^2(t)\}e^t,
\end{align}
Let $a_{k}:=3 C (4n)^{2n}4^{kn}>1$,
$K_0:=\max\{1,m_0,\|\rho_0\|_{L^{\infty}(\R^n)}\}$, $K=K_0^{\frac{p_k}{2^k}}$.
Then we have
\begin{eqnarray}\label{cesti3.26}
y_k(0):=\|\rho_0\|_{L^{p_k}(\R^n)}^{p_k}\leq\Big(\max~\{m_0,\|\rho_0\|_{L^{\infty}(\R^n)}\}\Big)^{p_k}
\leq K_0^{p_k}=K^{2^k}.
\end{eqnarray}
Integrating \eqref{inequa4} from 0 to $t$ and using (\ref{cesti3.26}), we obtain
\begin{align}\label{cesti3.27}
y_k(t)&\leq a_k \max\{1,
\sup_{t\geq0}y_{k-1}^2(t)\}(1-e^{-t})+y_k(0)e^{-t}\nonumber\\
&\leq2a_k \max\{1,
\sup_{t\geq0}y_{k-1}^2(t),y_{k}(0)\}\nonumber\\
&\leq2a_k \max\{
\sup_{t\geq0}y_{k-1}^2(t),K^{2^k}\}.
\end{align}
From \eqref{cesti3.27} after $k-1$ times iteration, we get
\begin{align}\label{cesti3.28}
y_k(t)&\leq(2a_k)(2a_{k-1})^2(2a_{k-2})^{2^2}\cdots(2a_1)^{2^{k-1}}\max\{
\sup_{t\geq0}y_0^{2^k}(t),K^{2^k}\}\nonumber\\
&=\big(6 C(4n)^{2n}\big)^{2^k-1}(4^{n})^{2\cdot2^k-k-2}\max\{\sup_{t\geq0}y_0^{2^k}(t),K^{2^k}\}.
\end{align}
Taking the power $\frac{1}{p_k}$ to \eqref{cesti3.28}, we have
\begin{eqnarray}\label{cesti3.29}
\|\rho\|_{L^{p_k}}\leq6 C(4n)^{2n}4^{2n}\max\{
\sup_{t\geq0}y_0(t),K\}.
\end{eqnarray}

On the other hand, by (\ref{Lp}) we know that $y_0(t)=\|\rho(\cdot,t)\|_{L^{p_0}(\R^n)}^{p_0}$ can be bounded by the following form
\begin{eqnarray}\label{cesti3.24}
y_0(t)=\|\rho\|_{L^{p_0}(\R^n)}^{p_0}\leq C.
\end{eqnarray}
Finally the deserved estimate in (\ref{1.7}) follows from \eqref{cesti3.29} and \eqref{cesti3.24}.

\end{document}